\providecommand{\U}[1]{\protect\rule{.1in}{.1in}}
\newtheorem{theorem}{Theorem}
\newtheorem{corollary}[theorem]{Corollary}
\newtheorem{lemma}[theorem]{Lemma}
\newtheorem{proposition}[theorem]{Proposition}
\newtheorem{remark}[theorem]{Remark}
\begin{document}
	\author[Yong Liu]{Yong Liu}
	\address{Department of Mathematics, University of Science and Technology of China,
		Hefei, P.R. China}
	\email{yliumath@ustc.edu.cn}
	\author[Juncheng Wei]{Juncheng Wei}
	\address{Department of Mathematics, University of British Columbia, Vancouver, B.C.
		Canada, V6T 1Z2.}
	\email{jcwei@math.ubc.ca}
	\author[Wen Yang]{Wen Yang}
	\address{Department of Mathematics, Faculty of Science and Technology, University of Macau, Macau, P.R. China}
	\email{wenyang@um.edu.mo}
	\title{Lump type solutions: Backlund transformation and spectral properties}
	\maketitle
	
	\begin{abstract}
		There are various different ways to obtain traveling waves of lump type for
		the KP equation. We propose a general and simple approach to derive them via a
		Backlund transformation. This enables us to establish an explicit connection
		between those low energy solutions and high energy ones. Based on this
		construction, spectral analysis of the degree $6$ solutions is then carried
		out in details. The analysis of higher energy ones can be done in an inductive way.
		
	\end{abstract}

	\section{\bigskip Introduction and statement of main results}
	
	KP equation is a two dimensional analogy of the classical KdV equation. It
	naturally appears in the theory of shallow water waves. As an important $2+1$
	dimensional integrable system, it has been extensively studied for more than
	forty years, and many other integrable systems can be regarded as its suitable
	reduction. However, there are still some interesting questions remained to be
	answered for this equation. As a matter of fact, even the properties of its
	traveling wave solutions are not fully understood yet. For traveling waves,
	the KP equation reduces to the so called Boussinesq equation:
	\begin{equation}
		\partial_{x}^{2}\left(  \partial_{x}^{2}u-u+3u^{2}\right)  -\partial_{y}%
		^{2}u=0. \label{Bou}%
	\end{equation}
	In principle, solutions to this equation should play important role in the
	long time dynamics of the KP equation.
	
	\medskip
	
	The Boussinesq equation in the form of $\left(  \ref{Bou}\right)  $ is of
	elliptic type and closely related to other PDEs such as GP equation. While
	there already exist a lot of works concerning elliptic equations of second
	order, the study of fourth order equations with both mathematical and physical
	significance is relatively few. In this paper, we would like to study the
	spectral properties of \textquotedblleft lump type\textquotedblright%
	\ solutions to $\left(  \ref{Bou}\right)  $. By lump type solutions, we mean
	solutions of $\left(  \ref{Bou}\right)  $ which decay to zero at infinity.
	This is a natural class of physically meaningful solutions. The name
	\textquotedblleft lump type\textquotedblright\ comes from the fact that the
	following \textquotedblleft classical\textquotedblright\ lump solution solves
	$\left(  \ref{Bou}\right)  $:
	\[
	U\left(  x,y\right)  =4\frac{y^{2}-x^{2}+3}{\left(  x^{2}+y^{2}+3\right)
		^{2}}.
	\]
	The analysis of $U$ has a long history. It is first obtained in
	\cite{Manakovz,Sat} by parameter degeneration. The spectral property of $U$ is
	now well understood. Indeed, we have proved in \cite{Liu} using Backlund
	transformation that $U$ is nondegenerated, in the sense that the linearized
	operator at $U$ does not have any nontrivial kernels. A direct consequence of
	this property is that the lump is orbitally stable under the KP-I flow, which
	is globally well posed \cite{Kenig,Moli,Moli2}. The asymptotical stability of
	$U$ remains to be an unsolved open problem in this direction.
	
	\medskip
	
	If we introduce the tau function $\tau$ by $u=2\partial_{x}^{2}\ln\tau,$ then
	the equation $\left(  \ref{Bou}\right)  $ turns into the following bilinear
	equation:
	\begin{equation}
		\left(  D_{x}^{4}-D_{x}^{2}-D_{y}^{2}\right)  \tau\cdot\tau=0.
		\label{bilinear}%
	\end{equation}
	Here $D$ is the bilinear derivative operator introduced by Hirota
	\cite{Hirota}. One easily checks that the lump solution $U$ corresponds to
	$\tau=x^{2}+y^{2}+3.$
	
	\medskip
	
	Our recent result \cite{LYW} shows that real valued solution(with a mild
	decaying assumption) of $\left(  \ref{Bou}\right)  $ has to be rational, and
	the corresponding tau function, which solves $\left(  \ref{bilinear}\right)
	,$ will be a polynomial of degree $k\left(  k+1\right)  $ with $k\in
	\mathbb{N}$. In the case of $k=1,$ it is not difficult to show that up to
	translation in the $x,y$ variables and multiplication by a constant, real
	valued solution to $\left(  \ref{bilinear}\right)  $ has to be $x^{2}%
	+y^{2}+3.$
	
	\medskip
	
	According to our classification result mentioned above, the next family of tau
	functions of $\left(  \ref{bilinear}\right)  $ are polynomials of degree $6,$
	corresponding to $k=2.$ In Section 2, we show, using Backlund transformation,
	that the following family of polynomials $h_{A,B}$ solves $\left(
	\ref{bilinear}\right)  ,$ where
	\begin{align*}
		h_{A,B}\left(  x,y\right)    =~& x^{6}+3x^{4}y^{2}+3x^{2}y^{4}+y^{6}%
		+25x^{4}+90x^{2}y^{2}+17y^{4}\\
		&  +Bx^{3}+3Ax^{2}y-3Bxy^{2}-Ay^{3}-125x^{2}+475y^{2}\\
		&  -Bx+5Ay+1875+\frac{A^{2}}{4}+\frac{B^{2}}{4}.
	\end{align*}
	Here $A,B\in\mathbb{R}$ are parameters. The solutions $2\partial_{x}^{2}\ln
	h_{A,B}$ of $\left(  \ref{Bou}\right)  $ will be denoted by $u_{A,B}.$
	Presumably, any degree $6$ solution should belong to this family. Note that
	this family of solutions are first obtained in \cite{Pelinovskii93}, using a
	limiting procedure on the involved parameters.
	
	\medskip
	
	As we know, solutions of most elliptic equations do not have explicit
	expression. Therefore the existence of such a family of
	non-radially-symmetric, rational, solutions $u_{A,B}$ to the Boussinesq
	equation is by itself an interesting phenomenon and provides an important
	scenario for us to analyze various properties of the solutions relevant in the
	subject of elliptic equations.
	
	Our first result is the following
	
	\begin{theorem}
		\label{main}For any $A,B,$ the solution $u=u_{A,B}$ is nondegenerated in the
		following sense: If $\phi$ is a solution of the linearized operator
		\[
		\partial_{x}^{2}\left(  \partial_{x}^{2}\phi-\phi+6u\phi\right)  -\partial
		_{y}^{2}\phi=0,
		\]
		with
		\[
		\phi\left(  x,y\right)  \rightarrow0\text{, as }x^{2}+y^{2}\rightarrow
		+\infty,
		\]
		then there exist constants $c_{1},...,c_{4},$ such that
		\[
		\phi=c_{1}\partial_{x}u+c_{2}\partial_{y}u+c_{3}\partial_{A}u+c_{4}%
		\partial_{B}u.
		\]
		Moreover, the Morse index of $u$ is equal to $3.$
	\end{theorem}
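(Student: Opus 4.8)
The plan is to reduce the spectral analysis at the degree-$6$ solution $u_{A,B}$ to that of the classical lump $U$, transporting information along the Backlund transformation of Section 2. The lower bound on the kernel is immediate: differentiating the Boussinesq equation in each of its four symmetry parameters shows that $\partial_x u,\partial_y u,\partial_A u,\partial_B u$ all solve the linearized equation $L\phi=\partial_x^2(\partial_x^2\phi-\phi+6u\phi)-\partial_y^2\phi=0$, and the explicit form of $h_{A,B}$ shows that each of them decays as $x^2+y^2\to\infty$. The whole content of the theorem is therefore the reverse inclusion, namely that the decaying kernel is at most four dimensional, together with the index count.

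For the nondegeneracy I would first pass to a self-adjoint reformulation. Since only decaying $\phi$ are considered, the equation $L\phi=0$ is equivalent to $\mathcal L\phi:=\partial_x^2\phi-\phi+6u\phi-\partial_x^{-2}\partial_y^2\phi=0$, because $L=\partial_x^2\mathcal L$ and the affine-in-$x$ ambiguity is killed by decay; on the natural space of functions lying in the range of $\partial_x$ the operator $\mathcal L$ is self-adjoint. The transformation of Section 2 connects $h_{A,B}$ to the degree-$2$ tau function $\sigma=x^2+y^2+3$ of the lump; writing it as a pair of bilinear relations $\mathcal B(h_{A,B},\sigma)=0$ and linearizing, an infinitesimal variation $\phi$ of $u=2\partial_x^2\ln h_{A,B}$ produces a variation $\psi$ of $v=2\partial_x^2\ln\sigma$ solving the linearized equation $\mathcal L_0\psi=0$ at $U$. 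This yields a transmutation operator $T$ intertwining $\mathcal L$ and $\mathcal L_0$, and the key input is that, by the nondegeneracy of the lump \cite{Liu}, the decaying kernel of $\mathcal L_0$ is exactly two dimensional, spanned by $\partial_x U$ and $\partial_y U$.

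The heart of the argument is then a dimension count across $T$. I would show that $T$ maps the decaying kernel of $\mathcal L$ into the decaying kernel of $\mathcal L_0$, and that its kernel, restricted to this space, is exactly $\operatorname{span}\{\partial_A u,\partial_B u\}$: these two modes descend trivially because the parameters $A,B$ label the fibre of the transformation over the fixed lower solution $\sigma$ and hence do not perturb $U$. The estimate $\dim(\ker\mathcal L\cap\{\text{decay}\})\le\dim\ker T+\dim\ker\mathcal L_0=2+2=4$ then matches the lower bound and closes the nondegeneracy. I expect the main obstacle to be precisely the decay analysis of $T$ and of the upward inversion of the linearized Backlund relations: the transformation inverts a differential relation in $x$, and one must rule out polynomially growing contributions and spurious kernel elements created by the rational singularities of the intermediate quantities. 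A careful asymptotic expansion at spatial infinity, exploiting the explicit rational structure of $h_{A,B}$ and $\sigma$, should control these terms, and this is where the bulk of the technical work lies.

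Finally, for the Morse index I would track the spectral flow of $\mathcal L$ along the Backlund transformation. The linearized operator at the lump $U$ has Morse index $1$ (the base case of the induction), and the passage from $\sigma$ up to $h_{A,B}$ introduces the two genuinely new kernel directions $\partial_A u,\partial_B u$; combining the intertwining relation with a continuation argument, or equivalently evaluating the quadratic form associated with $\mathcal L$ on a finite-dimensional test space built from the explicit kernel and its antiderivatives, should show that exactly two additional eigenvalues cross into the negative axis. This gives total Morse index $1+2=3$ and completes the proof, the same bookkeeping being what makes the analysis of the higher energy solutions inductive.
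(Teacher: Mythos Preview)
Your nondegeneracy argument via the linearised Backlund transformation is the approach the paper takes, but two structural points need correcting. First, there is no single Backlund step from $h_{A,B}$ down to a degree-$2$ tau function: the chain passes through an intermediate degree-$4$ polynomial $g$, so the intertwining has two stages and the dimension count must be done twice. Second, and more importantly, the degree-$2$ function at the bottom is not $\sigma=x^2+y^2+3$ but the complex translate $x^2+(y-2i/\sqrt{3})^2+3$, and the intermediate $g$ is complex-valued as well; the paper emphasises exactly this. What makes the linearised transformations invertible up to the expected finite-dimensional kernel is that both $f$ and $g$ have only finitely many \emph{simple} zeros; this is the structural hypothesis the paper invokes to import the argument of \cite{Liu}, and it replaces the asymptotic decay analysis you outline.

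For the Morse index your proposal has a genuine gap. A Backlund transformation is a discrete correspondence between solutions, not a continuous path in a space of potentials, so there is no spectral flow to track along it; and the quadratic-form test-function argument you mention produces only a lower bound on the index. The paper obtains the index by a different route. For $B$ large, $u_{0,B}$ splits asymptotically into three well-separated copies of the classical lump located at the vertices of an equilateral triangle; a reverse Lyapunov--Schmidt reduction (projection onto the six approximate translation modes $\partial_x U_k^\ast,\partial_y U_k^\ast$, governed by the linearisation of the balancing system $\sum_{k\neq j}(z_j-z_k)^{-3}=0$) shows that any negative eigenvalue of $\mathcal L_{u_{0,B}}$ stays uniformly bounded away from zero and hence converges to the unique negative eigenvalue of $\mathcal L_U$. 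This yields Morse index exactly $3$ for $B$ large. The value $3$ is then propagated to every $(A,B)$ by continuation along the one-parameter family $u_{At,\,Bt+(1-t)m}$, the nondegeneracy result guaranteeing that no eigenvalue crosses zero along the way. Thus the Backlund transformation enters only through the nondegeneracy step; the actual index computation comes from the large-parameter asymptotics, which is the part your proposal is missing.
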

	
	The Morse index of $u$ is, by definition, the number of negative
	eigenvalues(counted by multiplicity) of the operator
	\[
	\eta\rightarrow\partial_{x}^{2}\phi-\phi+6u\phi-\partial_{x}^{-2}\partial
	_{y}^{2}\phi.
	\]

	\medskip
	
	In the special case that $A=B=0,$ the solution $u_{0,0}$ will be even in both
	variables. This is precisely the solution obtained for the first time in
	\cite{Pelinovskii}. If we consider kernel which is also even in both
	variables, then from Theorem \ref{main}, we know that it has to be $0.$ As a
	direct application of this fact, one can then construct new subsonic traveling
	wave solutions to the GP equation from $u_{0,0}$, using the same perturbation
	method developed in \cite{LWW}. More precisely, we have
	
	\begin{corollary}
		For each $\varepsilon>0$ sufficiently small, there exists a traveling wave
		solution
		\[
		\Phi\left(  t,x,y\right)  =\phi\left(  x-\left(  \sqrt{2}-\varepsilon
		^{2}\right)  t,y\right)  ,
		\]
		to the following GP equation%
		\[
		i\partial_{t}\Phi+\Delta\Phi+\left(  1-\left\vert \Phi\right\vert ^{2}\right)
		\Phi=0,
		\]
		which has the asymptotic expansion%
		\[
		\phi\left(  x,y\right)  =1+i\varepsilon\partial_{x}^{-1}u^{\ast}+O\left(
		\varepsilon^{2}\right)  ,
		\]
		where
		\[
		u^{\ast}\left(  x,y\right)  =-\frac{3}{2}u_{0,0}\left(  2^{\frac{3}{4}%
		}x,2^{\frac{1}{2}}y\right)  .
		\]
		
	\end{corollary}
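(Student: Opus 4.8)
The plan is to realize $\phi$ as a perturbation of the constant background $1$ in the transonic regime $c = \sqrt{2} - \varepsilon^{2} \nearrow \sqrt{2}$, where the linearization of the travelling wave equation at $1$ degenerates and the problem is governed to leading order by a lump of the Boussinesq equation $(\ref{Bou})$. Writing $\phi = 1 + \rho + i\sigma$ with real $\rho,\sigma$ vanishing at infinity and inserting into
\[
-ic\,\partial_{x}\phi + \Delta\phi + \left(1 - |\phi|^{2}\right)\phi = 0,
\]
one obtains a coupled real system whose linear part is $c\,\partial_{x}\sigma + \Delta\rho - 2\rho = 0$ and $-c\,\partial_{x}\rho + \Delta\sigma = 0$. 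Its dispersion relation $c^{2}k_{1}^{2} = (k_{1}^{2}+k_{2}^{2})(k_{1}^{2}+k_{2}^{2}+2)$ has no nonzero real roots for $c^{2}<2$, but the spectral gap closes at $k=0$ as $c\to\sqrt{2}$, which is exactly the long--wave degeneration producing the KP--I/Boussinesq reduction. In the associated anisotropic long--wave scaling the leading balance forces $\rho = O(\varepsilon^{2})$ and $\sigma = \varepsilon\,\partial_{x}^{-1}u^{\ast} + O(\varepsilon^{2})$, where $u^{\ast}(x,y) = -\tfrac{3}{2}u_{0,0}(2^{3/4}x, 2^{1/2}y)$ solves the suitably normalized $(\ref{Bou})$; this is the origin of the stated expansion.

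First I would build an approximate solution $\phi_{\mathrm{app}} = 1 + i\varepsilon\,\partial_{x}^{-1}u^{\ast} + (\text{second order correction})$, pushing the expansion to high enough order in $\varepsilon$ that the residual $S(\phi_{\mathrm{app}})$, measured in a weighted space encoding the \emph{algebraic} decay of lump--type profiles, is smaller than the norm of the inverse of the linearized operator. Then I would seek a true solution $\phi = \phi_{\mathrm{app}} + \psi$ and recast $S(\phi_{\mathrm{app}}+\psi)=0$ as the fixed--point equation
\[
\psi = -\,\mathcal{L}_{\varepsilon}^{-1}\bigl(S(\phi_{\mathrm{app}}) + Q(\psi)\bigr),
\]
where $\mathcal{L}_{\varepsilon}$ is the linearization at $\phi_{\mathrm{app}}$ and $Q$ collects the quadratic and higher terms. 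A contraction--mapping argument in the weighted space then yields a correction $\psi = O(\varepsilon^{2})$, which completes the construction and preserves the leading expansion.

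The crux is the uniform (in $\varepsilon$) invertibility of $\mathcal{L}_{\varepsilon}$. After the same long--wave rescaling, $\mathcal{L}_{\varepsilon}$ converges to the operator obtained by linearizing $(\ref{Bou})$ at $u_{0,0}$, namely $\phi\mapsto\partial_{x}^{2}(\partial_{x}^{2}\phi-\phi+6u_{0,0}\phi)-\partial_{y}^{2}\phi$. I would run the whole scheme inside the subspace of functions even in both $x$ and $y$, which is invariant because $u_{0,0}$ is even in both variables. On this subspace Theorem \ref{main} supplies the decisive input: the four kernel elements $\partial_{x}u_{0,0}$, $\partial_{y}u_{0,0}$, $\partial_{A}u_{0,0}$, $\partial_{B}u_{0,0}$ are each odd in at least one variable (for instance $\partial_{A}h_{0,0} = 3x^{2}y - y^{3} + 5y$ is odd in $y$ and $\partial_{B}h_{0,0} = x^{3} - 3xy^{2} - x$ is odd in $x$), so the limiting linearized Boussinesq operator has trivial kernel in the even--even class. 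Combined with its Fredholm property in the weighted spaces, this produces a bounded inverse.

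The main obstacle, and the delicate point of the whole argument, is precisely this invertibility step. Because the essential spectrum of the linearization at the background $1$ reaches $0$ as $c\to\sqrt{2}$, the estimate cannot come from a naive resolvent bound; it must be transferred to the limiting Boussinesq operator, where the slow algebraic decay and the anisotropy of the scaling force carefully tuned weighted norms and a uniform control of the nonlocal operator $\partial_{x}^{-1}$ and of the $\rho$--$\sigma$ coupling. Staying inside the even--even class so that Theorem \ref{main} applies is what removes the four--dimensional kernel and makes the uniform inverse possible; once it is in hand, the remaining estimates are the routine steps of the perturbation method of \cite{LWW}.
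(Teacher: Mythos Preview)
Your proposal is correct and follows exactly the route the paper indicates: the paper does not give a detailed proof of this Corollary but simply observes that $u_{0,0}$ is even in both variables, that Theorem~\ref{main} then forces the even--even kernel of the linearized Boussinesq operator to be trivial, and that one can therefore run the perturbation method of \cite{LWW}. Your sketch fleshes out precisely this argument, including the correct parity check on the four kernel elements.
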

	
	To the best of our knowledge, this is the first result in the construction of
	high energy traveling wave solution for the GP equation in the subsonic
	regime, although there are quite a few existence results on the least energy solutions.
	
	\medskip
	
	From the even solution $u_{0,0}$ and our nondegeneracy result, one can
	actually also construct new nontrivial solutions to following generalized KP-I
	equation:%
	\[
	\partial_{x}^{2}u-u+\left\vert u\right\vert ^{\alpha}-\partial_{x}%
	^{-2}\partial_{y}^{2}u=0,
	\]
	provided that the exponent $\alpha$ is sufficiently close to $2.$ Note that
	for this type of generalized KP-I equation, one can use variational method
	(See \cite{Saut97,Bouard97,LiuYue}) to construct the ground state solution,
	whose Morse index is presumably equal to $1.$ On the other hand, the new
	solutions close to $u_{0,0}$ will have Morse index $3,$ and it seems to be
	hard to construct them by variational method. It is also interesting to see
	whether this type of solutions exist for $\alpha$ not close to $2.$
	
	\medskip
	
	The method developed in this paper to prove Theorem \ref{main} can actually be
	used to construct and analyze the spectral properties of higher energy lump
	type solutions. We emphasize that general lump type solutions of KP equation
	with degree $k\left(  k+1\right)  ,$ and $k$ free parameters have already been
	found in \cite{Pelinovskii95,Pelinovskii,Pelnovsky94,Pelnovsky98}, using the
	Wronskian representation. It is also worth mentioning that there are also
	other methods to construct these solutions, see
	\cite{Ablowitz,Airault,Clarkson,Prada,Vi,Yang}. Our method has the advantage
	that it establishes explicit connection between low energy solutions and high
	energy ones. The complete classification of lump type solutions remains open.
	
	\medskip
	
	The paper is organized in the following way. In Section 2, we explain how to
	use the Backlund transformation to create higher energy solutions from the low
	energy solutions. We point out that these solutions are in general complex
	valued. In Section 3, we analyze the precise asymptotic behavior of the
	eigenfunctions of the linearized operator and show that the Morse index of
	$h_{A,B}$ is equal to $3.$ This is based on a \textquotedblleft
	reverse\textquotedblright\ Lyapunov-Schmidt reduction type argument. The very
	delicate point here is that the reduced problem is actually degenerated.
	\medskip

	\noindent \textbf{Acknowledgement}\quad  Y. Liu was supported by the National Key R\&D Program of China 2022YFA1005400 and NSFC 11971026, NSFC 12141105.  J.Wei was supported by NSERC of Canada.  W. Yang was supported by National Key R\&D Program of China 2022YFA1006800, NSFC No.12171456 and NSFC No.12271369.
	
	\section{Backlund transformation from low energy solutions to high energy
		ones}
	
	In this section, we propose a general scheme to create high energy lump type
	solutions from low energy ones. Although there are other methods to construct
	these solutions, our method has the advantage that it establishes explicit
	link between solutions with different energy, which in turn enables us to
	study their spectral properties using an inductive argument.

	\medskip
	
	The Boussinesq equation has the following Backlund transformation(See
	\cite{Liu}):
	\begin{equation}
		\label{Back}
		\begin{cases} 
			\left(  D_{x}^{2}+\mu D_{x}+\frac{i}{\sqrt{3}}D_{y}-\lambda\right)  f\cdot
			g=0,\\
			\\
			\left(  \left(  3\lambda-1\right)  D_{x}-\sqrt{3}i\mu D_{y}+D_{x}^{3}-\sqrt
			{3}iD_{x}D_{y}+v\right)  f\cdot g=0.
		\end{cases}
	\end{equation}
	Here $\mu,\lambda,v$ are arbitrary parameters, and throughout the paper, $i$
	will be the imaginary unit. The Backlund transformation $\left(
	\ref{Back}\right)  $ has the following property: If $f$ satisfies the bilinear
	equation $\left(  \ref{bilinear}\right)  ,$ and $f,g$ satisfy the system
	$\left(  \ref{Back}\right)  ,$ then $g$ will automatically be a solution of
	$\left(  \ref{bilinear}\right)  .$ In this way, to get a new solution, we only
	need to solve a linear problem involving third order derivatives, instead of
	the original nonlinear problem with fourth order derivatives.
	
	\medskip
	
	If $f,g$ are polynomials, then in view of the highest degree terms,
	necessarily $\lambda=v=0.$ Then inspecting the highest degree terms, we find
	that $\mu=\pm\frac{1}{\sqrt{3}}$. We are thus lead to consider the following
	\begin{equation}
		\label{Back2}
		\begin{cases}
			\left(  D_{x}^{2}+\mu D_{x}+\frac{i}{\sqrt{3}}D_{y}\right)  f\cdot g=0,\\
			\\
			\left(  -D_{x}-\sqrt{3}i\mu D_{y}+D_{x}^{3}-\sqrt{3}iD_{x}D_{y}\right)  f\cdot
			g=0.
		\end{cases}
	\end{equation}

	Let $f$ be a real valued polynomial solution of $\left(  \ref{bilinear}%
	\right)  $ of degree $2n.$ The classification result in \cite{LYW} tells us
	that
	\[
	n=\frac{k\left(  k+1\right)  }{2},\text{ for some }k\in\mathbb{N}.
	\]
	Those degree $j$ terms in $f$ will be denoted by $f_{j}.$ Our first key
	observation is that it will be more convenient to consider the problem in the
	$z$-$\bar{z}$ coordinate, rather than the usual $x$-$y$ coordinate, where%
	\[
	z=x+yi\text{ and }\bar{z}=x-yi.
	\]
	It follows from Lemma 13 of \cite{LYW} that $f_{2n}=z^{n}\bar{z}^{n}$, and by
	suitable translation in the $x$ and $y$ variables, we can assume that
	$f_{2n-1}=0.$ We also observe that
	\[
	D_{x}+iD_{y}=2D_{\bar{z}}\text{ \ and }D_{x}-iD_{y}=2D_{z}.
	\]
	This formula, although very simple, will be frequently used in this section.

	\medskip
	
	The following lemma tells us that if $f,g$ are connected through Backlund
	transformation, then the degree of $g$ will be a square number and essentially
	determined by that of $f,$ and more importantly, $g$ will be complex valued.
	
	\begin{lemma}
		\label{degr}Let $f$ be a real valued polynomial solution of $\left(
		\ref{bilinear}\right)  $ of degree $2n$ with $f_{2n-1}=0.$ Suppose $f,g$
		satisfies $\left(  \ref{Back2}\right)  .$ Then the highest degree term $g_{m}$
		of $g$ has the form
		\[
		g_{m}=z^{j}\bar{z}^{n},
		\]
		where $j$ satisfies
		\begin{equation}
			n\left(  n-1\right)  -2nj+j\left(  j-1\right)  =0. \label{jn}%
		\end{equation}
		In particular, if $n=\frac{k\left(  k+1\right)  }{2}$ for some integer $k,$
		then the degree $j+n$ of $g$ is equal to
		\[
		k^{2}\text{ or }\left(  k+1\right)  ^{2}.
		\]
		Moreover, $g_{m-1}=\sqrt{3}nz^{j}\bar{z}^{n-1}+cz^{j-1}\bar{z}^{n},$ where
		\[
		c=-\frac{\left(  j-n\right)  \left(  j+n-1\right)  }{2\sqrt{3}\left(
			n-j+1\right)  },
		\]
		and $g_{m-2}$ solves
		\[
		2D_{\bar{z}}f_{2n}\cdot g_{m-2}+2D_{\bar{z}}f_{2n-2}\cdot g_{m}+\sqrt{3}%
		D_{x}^{2}f_{2n}\cdot g_{m-1}=0.
		\]
		
	\end{lemma}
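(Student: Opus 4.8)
The plan is to work entirely in the $z$-$\bar{z}$ coordinates and to extract, order by order in the total degree, the consequences of the two equations in $(\ref{Back2})$. Using $D_x=D_z+D_{\bar z}$ and $iD_y=D_{\bar z}-D_z$, and fixing the sign $\mu=1/\sqrt{3}$ (the choice $\mu=-1/\sqrt{3}$ produces the complex conjugate family), the two equations become
\begin{align*}
\left(D_x^2+\tfrac{2}{\sqrt 3}D_{\bar z}\right)f\cdot g&=0,\\
\left(D_x^3+\sqrt 3\,D_z^2-\sqrt 3\,D_{\bar z}^2-2D_{\bar z}\right)f\cdot g&=0,
\end{align*}
where $D_x^2=D_z^2+2D_zD_{\bar z}+D_{\bar z}^2$. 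The only facts I need about $f$ are $f_{2n}=z^n\bar z^n$ and $f_{2n-1}=0$. Every step reduces to bookkeeping of exponents via the bilinear monomial identities
\[
D_{\bar z}(z^a\bar z^b\cdot z^c\bar z^d)=(b-d)\,z^{a+c}\bar z^{b+d-1},\qquad D_z^2(z^a\bar z^b\cdot z^c\bar z^d)=\big[(a-c)^2-(a+c)\big]z^{a+c-2}\bar z^{b+d},
\]
together with the analogous formulas for $D_zD_{\bar z}$, $D_{\bar z}^2$ and $D_x^3$.

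First I would read off the top term. Since $g$ is determined only up to a constant multiple, I normalise the leading coefficient to $1$. The highest-degree part (degree $2n+m-1$) of the first equation is $\tfrac{2}{\sqrt 3}D_{\bar z}(f_{2n}\cdot g_m)=0$; writing $g_m=\sum c_{a,b}z^a\bar z^b$ and using the $D_{\bar z}$ identity forces $c_{a,b}=0$ whenever $b\neq n$, so $g_m=z^{j}\bar z^{n}$ with $j:=m-n$. At the next order (degree $2n+m-2$) the first equation reads
\[
\big(D_z^2+2D_zD_{\bar z}+D_{\bar z}^2\big)(f_{2n}\cdot g_m)+\tfrac{2}{\sqrt 3}D_{\bar z}(f_{2n}\cdot g_{m-1})=0,
\]
while the second reads
\[
\big(\sqrt 3\,D_z^2-\sqrt 3\,D_{\bar z}^2\big)(f_{2n}\cdot g_m)-2D_{\bar z}(f_{2n}\cdot g_{m-1})=0 .
\]
Adding $\sqrt 3$ times the first to the second makes every $g_{m-1}$ term cancel, together with the $z^{n+j}\bar z^{2n-2}$ terms, and leaves $2\sqrt 3\,[(n-j)^2-(n+j)]\,z^{n+j-2}\bar z^{2n}=0$, which is exactly $(\ref{jn})$. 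Substituting $n=k(k+1)/2$ gives $8n+1=(2k+1)^2$, so the quadratic $(\ref{jn})$ has the two roots $j=\tfrac{k(k-1)}{2}$ and $j=\tfrac{(k+1)(k+2)}{2}$, whence $n+j=k^2$ or $(k+1)^2$. Both roots differ from $n$ for every $k\ge 1$, so $g_m=z^j\bar z^n$ is not invariant under $z\leftrightarrow\bar z$ and $g$ cannot be real valued.

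With $(\ref{jn})$ in hand, the first equation at order $2n+m-2$ collapses to $\tfrac{2}{\sqrt 3}D_{\bar z}(f_{2n}\cdot g_{m-1})=2n\,z^{n+j}\bar z^{2n-2}$. Inverting $D_{\bar z}(f_{2n}\cdot\,\cdot\,)$ produces the coefficient $\sqrt 3\,n$ of $z^{j}\bar z^{n-1}$ and kills every monomial of $\bar z$-degree other than $n-1$ or $n$; the monomial $z^{j-1}\bar z^{n}$ lies in the kernel of $D_{\bar z}(f_{2n}\cdot\,\cdot\,)$, so its coefficient $c$ is invisible at this order. This degeneracy, namely that the ``$\bar z^n$'' direction is precisely the kernel (and, dually, the cokernel) of the relevant bilinear operator, is the crux of the computation and the reason one must descend one further order. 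The last displayed claim, the equation for $g_{m-2}$, is then obtained simply by multiplying the first equation of $(\ref{Back2})$ by $\sqrt 3$, using $\sqrt 3(D_z^2+2D_zD_{\bar z}+D_{\bar z}^2)=\sqrt 3\,D_x^2$, and collecting all contributions of degree $2n+m-3$ (here $f_{2n-1}=0$ removes the would-be $f_{2n-1}$ couplings).

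Finally, to pin down $c$ I would form, at order $2n+m-3$, the analogous combination \emph{second equation plus $\sqrt 3$ times first equation}. By the same mechanism as before the unknown $g_{m-2}$ terms and the $f_{2n-2}\cdot g_m$ terms cancel identically, leaving the relation $D_x^3(f_{2n}\cdot g_m)+2\sqrt 3\,D_zD_x(f_{2n}\cdot g_{m-1})=0$, which involves only $g_m$ and the already-determined part of $g_{m-1}$ together with the single unknown $c$. Matching the coefficient of $z^{n+j-3}\bar z^{2n}$, the highest $\bar z$-degree monomial present, to which among the $g_{m-1}$ contributions only the $c\,z^{j-1}\bar z^n$ term contributes (through $D_z^2$), yields one linear equation for $c$; after expanding $D_x^3(f_{2n}\cdot g_m)$, factoring the resulting cubic in $n,j$, and simplifying with $(\ref{jn})$, one obtains $c=-\frac{(j-n)(j+n-1)}{2\sqrt 3(n-j+1)}$. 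I expect the main obstacle to be precisely this last step: isolating the correct scalar relation from the third-order equation and carrying out the factorisation that matches the stated closed form for $c$, since several monomials that are \emph{a priori} nonzero must be checked to drop out before the coefficient multiplying $c$ can be read off cleanly.
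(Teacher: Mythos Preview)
Your proposal is correct and follows essentially the same route as the paper. Both arguments read off $g_m=z^j\bar z^n$ from $D_{\bar z}(f_{2n}\cdot g_m)=0$, obtain $(\ref{jn})$ as the compatibility of the two equations at the next order, solve for $g_{m-1}$ up to the kernel term $cz^{j-1}\bar z^n$, and then determine $c$ from compatibility one order lower; the only cosmetic difference is that you form the combination $\sqrt3\times(\text{first})+(\text{second})$ in $z,\bar z$ variables, which produces the operator $2\sqrt3\,D_zD_x$, while the paper subtracts the two equations in $x,y$ form and uses $D_x^2-iD_xD_y=2D_xD_z$ --- the same identity written two ways.
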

	
	\begin{proof}
		Balancing highest degree terms in $\left(  \ref{Back2}\right)  $ requires
		\[
		D_{\bar{z}}f_{2n}\cdot g_{m}=0.
		\]
		This readily implies
		\[
		g_{m}=cz^{j}\bar{z}^{n},
		\]
		for some constant $c\ $and non-negative integer $j.$ The constant $c$ can be
		normalized to be $1.$
		
		Since $f_{2n-1}=0,$ the degree $m-1$ terms in $g$ should satisfy
		\begin{equation}
			\label{gm-1}
			\begin{cases}
				2D_{\bar{z}}f_{2n}\cdot g_{m-1}+\sqrt{3}D_{x}^{2}f_{2n}\cdot g_{m}=0,\\
				\\
				2D_{\bar{z}}f_{2n}\cdot g_{m-1}+\sqrt{3}iD_{x}D_{y}f_{2n}\cdot g_{m}=0.
			\end{cases}
		\end{equation}
		We compute
		\begin{align*}
			D_{x}^{2}\left(  z^{n}\bar{z}^{n}\right)  \cdot\left(  z^{j}\bar{z}%
			^{k}\right)     =~&\left[  n\left(  n-1\right)  -2nj+j\left(  j-1\right)
			\right]  z^{j+n-2}\bar{z}^{n+k}\\
			&  +\left[  2n^{2}-2nk-2nj+2jk\right]  z^{j+n-1}\bar{z}^{n+k-1}\\
			&  +\left[  n\left(  n-1\right)  -2nk+k\left(  k-1\right)  \right]
			z^{j+n}\bar{z}^{n+k-2}.
		\end{align*}
		Similarly, we have
		\begin{align*}
			D_{x}D_{y}\left(  z^{n}\bar{z}^{n}\right)  \cdot\left(  z^{j}\bar{z}%
			^{k}\right)     =~&\left[  n\left(  n-1\right)  -2nj+j\left(  j-1\right)
			\right]  iz^{j+n-2}\bar{z}^{k+n}\\
			&  +\left[  -n\left(  n-1\right)  +2nk-k\left(  k-1\right)  \right]
			iz^{j+n}\bar{z}^{k+n-2}.
		\end{align*}
		From the system $\left(  \ref{gm-1}\right)  ,$ we deduce
		\[
		D_{x}^{2}f_{2n}\cdot g_{m}=iD_{x}D_{y}f_{2n}\cdot g_{m},
		\]
		which implies that when $k=n,$ there holds
		\begin{align*}
			&  \left[  n\left(  n-1\right)  -2nj+j\left(  j-1\right)  \right]
			z^{j+n-2}\bar{z}^{n+k}+\left[  n\left(  n-1\right)  -2nk+k\left(  k-1\right)
			\right]  z^{j+n}\bar{z}^{n+k-2}\\
			&  =-\left[  n\left(  n-1\right)  -2nj+j\left(  j-1\right)  \right]
			z^{j+n-2}\bar{z}^{k+n}\\
			&\quad-\left[  -n\left(  n-1\right)  +2nk-k\left(  k-1\right)
			\right]  z^{j+n}\bar{z}^{k+n-2}.
		\end{align*}
		From this we get the relation $\left(  \ref{jn}\right)  $ between $j$ and $n.$
		
		Now $g_{m-1}$ satisfies%
		\[
		D_{\bar{z}}f_{2n}\cdot g_{m-1}=-\frac{\sqrt{3}}{2}D_{x}^{2}f_{2n}\cdot
		g_{m}=\sqrt{3}nz^{j+n}\bar{z}^{2n-2}.
		\]
		Therefore we obtain
		\[
		g_{m-1}=\sqrt{3}nz^{j}\bar{z}^{n-1}+cz^{j-1}\bar{z}^{n},
		\]
		where $c$ is a constant to be determined.
		
		To find $c,$ we consider the equations to be satisfied by $g_{m-2}:$
		\[
		\begin{cases}
			2D_{\bar{z}}f_{2n}\cdot g_{m-2}+2D_{\bar{z}}f_{2n-2}\cdot g_{m}+\sqrt{3}%
			D_{x}^{2}f_{2n}\cdot g_{m-1}=0,\\
			\\
			2D_{\bar{z}}f_{2n}\cdot g_{m-2}+2D_{\bar{z}}f_{2n-2}\cdot g_{m}+\sqrt{3}%
			iD_{x}D_{y}f_{2n}\cdot g_{m-1}-D_{x}^{3}f_{2n}\cdot g_{m}=0.
		\end{cases}
		\]
		We compute
		\begin{align*}
			&  D_{x}^{3}\left(  z^{n}\bar{z}^{n}\right)  \cdot\left(  z^{j}\bar{z}%
			^{k}\right) \\
			&  =\left[  n\left(  n-1\right)  \left(  n-2\right)  -3kn\left(  n-1\right)
			+3nk\left(  k-1\right)  -k\left(  k-1\right)  \left(  k-2\right)  \right]
			z^{j+n}\bar{z}^{k+n-3}\\
			& \quad+\left[  3n^{2}\left(  n-1\right)  -3jn\left(  n-1\right)  -6n^{2}%
			k+6njk+3nk\left(  k-1\right)  -3jk\left(  k-1\right)  \right]  z^{j+n-1}%
			\bar{z}^{k+n-2}\\
			&\quad  +\left[  3n^{2}\left(  n-1\right)  -6n^{2}j-3nk\left(  n-1\right)
			+6njk+3nj\left(  j-1\right)  -3j\left(  j-1\right)  k\right]  z^{j+n-2}\bar
			{z}^{k+n-1}\\
			&\quad  +\left[  n\left(  n-1\right)  \left(  n-2\right)  -3nj\left(  n-1\right)
			+3nj\left(  j-1\right)  -j\left(  j-1\right)  \left(  j-2\right)  \right]
			z^{j+n-3}\bar{z}^{k+n}.
		\end{align*}
		We find that if $j$ and $n$ satisfy $\left(  \ref{jn}\right)  ,$ then
		\[
		D_{x}^{3}\left(  z^{n}\bar{z}^{n}\right)  \cdot\left(  z^{j}\bar{z}%
		^{n}\right)  =\left(  6jn-6n^{2}\right)  z^{j+n-1}\bar{z}^{2n-2}+2\left(
		j-n\right)  \left(  j+n-1\right)  z^{j+n-3}\bar{z}^{2n},
		\]
		and moreover,
		\begin{align*}
			&  \sqrt{3}D_{x}^{2}f_{2n}\cdot g_{m-1}-\sqrt{3}iD_{x}D_{y}f_{2n}\cdot
			g_{m-1}+D_{x}^{3}f_{2n}\cdot g_{m}\\
			&  =2\sqrt{3}D_{x}D_{z}\left(  z^{n}\bar{z}^{n}\right)  \left(  \sqrt{3}%
			nz^{j}\bar{z}^{n-1}+cz^{j-1}\bar{z}^{n}\right)  +D_{x}^{3}\left(  z^{n}\bar
			{z}^{n}\right)  \left(  z^{j}\bar{z}^{n}\right) \\
			&  =2\sqrt{3}c\left(  2n-2j+2\right)  z^{j+n-3}\bar{z}^{2n}+2\left(
			j-n\right)  \left(  j+n-1\right)  z^{j+n-3}\bar{z}^{k+n}.
		\end{align*}
		Compatibility of the two equations in $\left(  \ref{Back2}\right)  $ requires
		the right hand side to be $0.$ It follows that
		\[
		c=-\frac{\left(  j-n\right)  \left(  j+n-1\right)  }{2\sqrt{3}\left(
			n-j+1\right)  }.
		\]
		This proves the assertion.
	\end{proof}
	
	This lemma tells us that if $n=1,$ then the degree of $g$ has to be $1$ or
	$4.$ We now proceed to construct explicit Backlund transformations from
	\begin{equation}
		f=x^{2}+y^{2}+3, \label{deg2}%
	\end{equation}
	the first nontrivial solution, to a family of degree $4$ polynomial. We will
	see that there will be a free complex parameter appearing in the process.
	
	\begin{proposition}
		Let $f\ $be given by $\left(  \ref{deg2}\right)  $ and
		\[
		g=z^{3}\bar{z}+\sqrt{3}z^{3}+\sqrt{3}z^{2}\bar{z}+12z^{2}-3\bar{z}^{2}%
		+3z\bar{z}+9\sqrt{3}z+\alpha\bar{z}-36+\sqrt{3}\alpha,
		\]
		where $\alpha\in\mathbb{C}$ is a parameter. Then $f,g$ satisfies
		\[
		\begin{cases}
			\left(  D_{x}^{2}+\frac{1}{\sqrt{3}}D_{x}+\frac{i}{\sqrt{3}}D_{y}\right)
			f\cdot g=0,\\
			\\
			\left(  D_{x}^{3}-\sqrt{3}iD_{x}D_{y}-D_{x}-iD_{y}\right)  f\cdot g=0.
		\end{cases}
		\]
		As a consequence, $g$ is a solution to the bilinear equation $\left(
		\ref{bilinear}\right)  $.
	\end{proposition}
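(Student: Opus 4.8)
The plan is to verify the two bilinear identities directly and then read off the conclusion. The final assertion that $g$ solves (\ref{bilinear}) is not something to establish separately: once the displayed system holds and $f$ is known to solve (\ref{bilinear}) (which (\ref{deg2}) does, as recorded in the introduction), the defining property of the Backlund transformation (\ref{Back2}) stated above guarantees automatically that $g$ is a solution. So the entire task reduces to checking the two Hirota equations for the explicit pair $(f,g)$.

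To make the verification tractable I would pass to the $z$-$\bar z$ coordinates used throughout this section, where $f = z\bar z + 3$ and $D_x + iD_y = 2D_{\bar z}$, $D_x - iD_y = 2D_z$. A short computation rewrites the two operators as
\[
L_1 = D_z^2 + 2D_z D_{\bar z} + D_{\bar z}^2 + \tfrac{2}{\sqrt{3}}\,D_{\bar z}
\]
and
\[
L_2 = D_z^3 + 3D_z^2 D_{\bar z} + 3D_z D_{\bar z}^2 + D_{\bar z}^3 + \sqrt{3}\,\bigl(D_z^2 - D_{\bar z}^2\bigr) - 2D_{\bar z}.
\]
The decisive simplification is that on monomials the Hirota derivative factors over the two variables: writing a monomial of $f$ as $\phi(z)\psi(\bar z)$ and one of $g$ as $\chi(z)\omega(\bar z)$, one has
\[
D_z^a D_{\bar z}^b\bigl(\phi\psi\cdot\chi\omega\bigr) = \bigl[D_z^a(\phi\cdot\chi)\bigr]\,\bigl[D_{\bar z}^b(\psi\cdot\omega)\bigr],
\]
and each one-variable factor $D_z^a(z^p\cdot z^r)$ is the single monomial $z^{p+r-a}$ times an explicit binomial sum. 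Hence applying either operator to $f\cdot g$ produces a finite sum of monomials $z^A\bar z^B$ whose coefficients are elementary to compute.

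With this in hand the verification is organized by total degree. The top two degrees are already pinned down: Lemma \ref{degr} with $n=1$, $j=3$ forces $g_4 = z^3\bar z$ and $g_3 = \sqrt{3}\,z^3 + \sqrt{3}\,z^2\bar z$ (the formula for $c$ there evaluates to $\sqrt{3}$), matching the stated $g$ and confirming that the leading terms of both equations cancel. What remains is to collect the coefficient of every monomial $z^A\bar z^B$ in each of $L_1(f\cdot g)$ and $L_2(f\cdot g)$ and check that it vanishes, descending through the linear and constant terms.

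I expect the main obstacle to be organizational rather than conceptual: the number of monomials, the complex coefficients carrying factors of $\sqrt{3}$ and $i$, and in particular the need to track the parameter $\alpha$. Since $\alpha$ enters $g$ only through the terms $\alpha\bar z$ and the constant $\sqrt{3}\,\alpha$, I would split each identity into its $\alpha$-linear and $\alpha$-independent parts and verify them separately; the $\alpha$-linear part is short and checks quickly, while the $\alpha$-independent part carries the bulk of the bookkeeping. A useful internal consistency check is that the relation between the two equations must reproduce exactly the compatibility condition already met in the proof of Lemma \ref{degr} (the equation for $g_{m-2}$), so that any discrepancy between the two computations would flag an arithmetic slip.
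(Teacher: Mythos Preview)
Your plan is correct and follows essentially the same route as the paper: both work in $z,\bar z$ coordinates, invoke Lemma~\ref{degr} for the leading terms, and then process the bilinear system degree by degree. The only difference is framing---the paper \emph{derives} each $g_m$ in turn from the compatibility of the two equations (thereby explaining how the free parameter $\alpha$ arises), whereas you take the finished $g$ and \emph{verify} the coefficients vanish; computationally these are the same calculation run in opposite directions.
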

	
	\begin{proof}
		Note that $f_{2}=z\bar{z}$ and $f_{0}=3.$ Applying Lemma \ref{degr}, we find
		$g_{4}=z^{3}\bar{z}$ and $g_{3}=\sqrt{3}z^{3}+\sqrt{3}z^{2}\bar{z}.$
		
		To get $g_{2},$ we observe that it satisfies
		\[
		2D_{\bar{z}}f_{2}\cdot g_{2}+2D_{\bar{z}}f_{0}\cdot g_{4}+\sqrt{3}D_{x}%
		^{2}f_{2}\cdot g_{3}=0.
		\]
		Solving this equation gives
		\[
		g_{2}=12z^{2}-3\bar{z}^{2}+az\bar{z},
		\]
		where $a$ is a parameter still to be determined in the next step.
		
		To find $a$ and $g_{1},$ we use the fact that $g_{1}$ satisfies
		\[
		\begin{cases}
			2D_{\bar{z}}f_{2}\cdot g_{1}+2D_{\bar{z}}f_{0}\cdot g_{3}+\sqrt{3}D_{x}%
			^{2}f_{2}\cdot g_{2}+\sqrt{3}D_{x}^{2}f_{0}\cdot g_{4}=0,\\
			\\
			2D_{\bar{z}}f_{2}\cdot g_{1}+2D_{\bar{z}}f_{0}\cdot g_{3}+\sqrt{3}iD_{x}%
			D_{y}f_{2}\cdot g_{2}+\sqrt{3}iD_{x}D_{y}f_{0}\cdot g_{4}-D_{x}^{3}f_{2}\cdot
			g_{3}=0.
		\end{cases}
		\]
		Compatibility of these two equations implies that $a=3$ and
		\[
		g_{2}=12z^{2}-3\bar{z}^{2}+3z\bar{z}.
		\]
		Now solving the following equation for $g_{1}:$%
		\[
		2D_{\bar{z}}f_{2}\cdot g_{1}+2D_{\bar{z}}f_{0}\cdot g_{3}+\sqrt{3}D_{x}%
		^{2}f_{2}\cdot g_{2}+\sqrt{3}D_{x}^{2}f_{0}\cdot g_{4}=0,
		\]
		we get
		\[
		g_{1}=9\sqrt{3}z+\alpha\bar{z},
		\]
		where $\alpha$ is a parameter.
		
		To see whether or not $\alpha$ can be arbitrary, we consider the equations
		satisfied by $g_{0}:$
		\[
		\begin{cases}
			2D_{\bar{z}}f_{2}\cdot g_{0}+2D_{\bar{z}}f_{0}\cdot g_{2}+\sqrt{3}D_{x}%
			^{2}f_{2}\cdot g_{1}+\sqrt{3}D_{x}^{2}f_{0}\cdot g_{3}=0,\\
			\\
			2D_{\bar{z}}f_{2}\cdot g_{0}+2D_{\bar{z}}f_{0}\cdot g_{2}+\sqrt{3}iD_{x}%
			D_{y}f_{2}\cdot g_{1}+\sqrt{3}iD_{x}D_{y}f_{0}\cdot g_{3}-D_{x}^{3}f_{2}\cdot
			g_{2}-D_{x}^{3}f_{0}\cdot g_{4}=0.
		\end{cases}
		\]
		Direct computation tells us that this system is compatible for any $\alpha,$
		and we are lead to
		\[
		2D_{\bar{z}}f_{2}\cdot g_{0}+\left(  72-2\sqrt{3}\alpha\right)  z=0,
		\]
		which implies that $g_{0}=-36+\sqrt{3}\alpha.$ One then checks that the
		function $g$ obtained in this way indeed solves $\left(  \ref{Back2}\right)
		.$ This finishes the proof.
	\end{proof}
	
	\subsection{A family of degree $6$ tau functions and their Backlund
		transformation}
	
	We have obtained a family of degree $4$ polynomials:%
	\[
	g=z^{3}\bar{z}+\sqrt{3}z^{3}+\sqrt{3}z^{2}\bar{z}+12z^{2}-3\bar{z}^{2}%
	+3z\bar{z}+9\sqrt{3}z+\alpha\bar{z}-36+\sqrt{3}\alpha.
	\]
	We would like to find all the degree $6$ polynomial $h$ such that $g,h$ are
	connected through the Backlund transformation. They are supposed to satisfy
	the following system%
	\[
	\begin{cases}
		\left(  D_{x}-iD_{y}-\sqrt{3}D_{x}^{2}\right)  g\cdot h=0,\\
		\\
		\left(  D_{x}-iD_{y}+\sqrt{3}iD_{x}D_{y}-D_{x}^{3}\right)  g\cdot h=0.
	\end{cases}
	\]
	In the $z$-$\bar{z}$ coordinate, it takes the form:%
	\begin{equation}
		\label{gh}
		\begin{cases}
			\left(  2D_{z}-\sqrt{3}D_{x}^{2}\right)  g\cdot h=0,\\
			\\
			\left(  2D_{z}+\sqrt{3}iD_{x}D_{y}-D_{x}^{3}\right)  g\cdot h=0.
		\end{cases}
	\end{equation}
	
	The following result provides the explicit formula of a family of degree 6 polynomial solutions.
	\begin{proposition}
		\label{degree6}Let $\alpha,\beta$ be parameters and
		\begin{align*}
			h=~&z^{3}\bar{z}^{3}-2\sqrt{3}z^{2}\bar{z}^{3}+2\sqrt{3}z^{3}\bar{z}%
			^{2}-3z^{4}+15z^{2}\bar{z}^{2}+6z\bar{z}^{3}-3\bar{z}^{4}+6z^{3}\bar{z}\\
			&  +\beta z^{3}+24\sqrt{3}z^{2}\bar{z}-24\sqrt{3}z\bar{z}^{2}+\left(
			3\sqrt{3}+\alpha\right)  \bar{z}^{3}\\
			&  -\left(  90+2\sqrt{3}\right)  z^{2}+63z\bar{z}-\left(  72-2\sqrt{3}%
			\alpha\right)  \bar{z}^{2}\\
			&  +\left(  189\sqrt{3}-3\alpha+6\beta\right)  z+\left(  -180\sqrt{3}%
			+6\alpha-3\beta\right)  \bar{z}\\
			&  +1161-6\sqrt{3}\alpha+9\sqrt{3}\beta+\alpha\beta.
		\end{align*}
		Then $g,h$ satisfy $\left(  \ref{gh}\right)  .$
	\end{proposition}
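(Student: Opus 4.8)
The plan is to determine $h$ by the same degree-descending scheme used in the two preceding propositions, and then to confirm the outcome by a direct substitution. Writing $g=\sum_{i=0}^{4}g_{i}$ and $h=\sum_{j=0}^{6}h_{j}$ as sums of their homogeneous parts in the $z$-$\bar z$ variables, I would first isolate the leading behaviour of $\left(\ref{gh}\right)$. Since $D_{z}$ and $D_{x}$ lower the total degree by one while $D_{x}^{2}$ and $D_{x}D_{y}$ lower it by two and $D_{x}^{3}$ by three, the genuinely highest-degree contribution to the first equation of $\left(\ref{gh}\right)$ comes only from $2D_{z}(g_{4}\cdot h_{6})$ with $g_{4}=z^{3}\bar z$. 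Using $D_{z}(z^{p}\bar z^{q}\cdot z^{r}\bar z^{s})=(p-r)z^{p+r-1}\bar z^{q+s}$, the vanishing of this term forces $h_{6}=z^{3}\bar z^{3}$, matching the asserted top term. This is the analogue of the leading balance in Lemma~\ref{degr}, although that lemma does not apply verbatim here because $g$ is complex valued and is not of the form $z^{n}\bar z^{n}$.

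The core of the argument is an induction running from $j=6$ down to $j=0$. At each level the homogeneous component of the appropriate degree in each of the two equations of $\left(\ref{gh}\right)$ yields a linear equation of the form $2D_{z}(g_{4}\cdot h_{j})=(\text{terms built from the already-known }g_{i}\text{ and }h_{j'}\text{ with }j'>j)$. The solvability operator $\psi\mapsto 2D_{z}(g_{4}\cdot\psi)$ annihilates exactly the monomials $z^{3}\bar z^{j-3}$, so for each $j\ge 3$ the solution $h_{j}$ is determined only up to adding a free multiple of $z^{3}\bar z^{j-3}$. I would then impose, as in the degree-$4$ proposition, the compatibility of the two equations of $\left(\ref{gh}\right)$ at the next lower degree: this both pins down the free constant introduced one step earlier and supplies the inhomogeneity for the new term. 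Carrying this out should reproduce $h_{5}$ and $h_{4}$ (in which the kernel coefficients $2\sqrt{3}$ of $z^{3}\bar z^{2}$ and $6$ of $z^{3}\bar z$ get fixed by compatibility), and then $h_{3}$, at which stage the kernel coefficient of $z^{3}$ survives as the genuinely free parameter $\beta$. The parameter $\alpha$ inherited from $g$ is carried along through the inhomogeneities and first surfaces in the $\bar z^{3}$ coefficient of $h_{3}$.

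Continuing to $h_{2}$, $h_{1}$ and finally the constant $h_{0}$, I would verify that the compatibility conditions at these lower degrees do not constrain $\beta$ and that the last condition at $h_{0}$ holds identically rather than forcing a relation between $\alpha$ and $\beta$; the appearance of the cross term $\alpha\beta$ in $h_{0}$ is a useful consistency check of this step. This is precisely the degeneracy that leaves a genuine two-parameter family, and it is the point I expect to require the most care. The heaviest part of the work is the degree-$6$ Hirota bookkeeping—evaluating $D_{x}^{3}(g\cdot h)$ and $D_{x}D_{y}(g\cdot h)$ on products of the explicit degree-$4$ and degree-$6$ polynomials—and checking that the compatibility conditions remain consistent at every level, with the subtlest degree being the one where an unfixed constant from above and the emerging $\beta$ are both present. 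Once $h$ has been assembled this way, the cleanest rigorous conclusion is simply to substitute the explicit $g$ and $h$ back into both equations of $\left(\ref{gh}\right)$ and confirm, degree by degree, that $(2D_{z}-\sqrt{3}D_{x}^{2})g\cdot h$ and $(2D_{z}+\sqrt{3}iD_{x}D_{y}-D_{x}^{3})g\cdot h$ vanish identically; these are finite polynomial identities that can be checked by hand or by computer algebra. I would keep the constructive derivation in the exposition, since it explains the origin of $h$ and the role of $\beta$, and relegate the final identity-check to a verification.
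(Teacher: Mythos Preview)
Your proposal is correct and follows essentially the same approach as the paper: a degree-descending scheme in the $z$--$\bar z$ variables, solving $2D_{z}(g_{4}\cdot h_{j})=\text{(known)}$ at each level with the kernel monomial $z^{3}\bar z^{\,j-3}$ contributing a free constant that is then fixed by the compatibility of the two equations of $(\ref{gh})$ at the next lower degree, until the genuinely free parameter $\beta$ appears in $h_{3}$. The paper carries this out explicitly down to $h_{3}$ and then says the remaining terms follow by routine computation; your suggestion to close with a direct polynomial verification of both identities in $(\ref{gh})$ is a sensible way to make the omitted steps rigorous.
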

	
	\begin{proof}
		The highest degree terms of $h$ has to be $h_{6}=z^{3}\bar{z}^{3}.$ The
		$h_{5}$ term can be obtained by solving the equation:
		\[
		2D_{z}g_{4}\cdot h_{5}=\sqrt{3}D_{x}^{2}g_{4}\cdot h_{6}-2D_{z}g_{3}\cdot
		h_{6},
		\]
		which gives
		\[
		h_{5}=-2\sqrt{3}z^{2}\bar{z}^{3}+cz^{3}\bar{z}^{2},
		\]
		where $c$ is a parameter to be determined using information of $h_{4},$ which
		satisfies
		\[
		2D_{z}g_{4}\cdot h_{4}+2D_{z}g_{3}\cdot h_{5}+2D_{z}g_{2}\cdot h_{6}-\sqrt
		{3}D_{x}^{2}g_{4}\cdot h_{5}-\sqrt{3}D_{x}^{2}g_{3}\cdot h_{6}=0,
		\]
		and%
		\begin{align*}
			&  2D_{z}g_{4}\cdot h_{4}+2D_{z}g_{3}\cdot h_{5}+2D_{z}g_{2}\cdot h_{6}%
			+\sqrt{3}iD_{x}D_{y}g_{4}\cdot h_{5}\\
			&  +\sqrt{3}iD_{x}D_{y}g_{3}\cdot h_{6}-D_{x}^{3}g_{4}\cdot h_{6}=0.
		\end{align*}
		The compatibility of these two equations gives $c=2\sqrt{3}\ $and hence
		\[
		h_{5}=-2\sqrt{3}z^{2}\bar{z}^{3}+2\sqrt{3}z^{3}\bar{z}^{2}.
		\]
		With $h_{5}$ at hand, $h_{4}$ can be found by solving the equation
		\[
		2D_{z}g_{4}\cdot h_{4}+2D_{z}g_{3}\cdot h_{5}+2D_{z}g_{2}\cdot h_{6}-\sqrt
		{3}D_{x}^{2}g_{4}\cdot h_{5}-\sqrt{3}D_{x}^{2}g_{3}\cdot h_{6}=0.
		\]
		This gives%
		\[
		h_{4}=-3z^{4}+15z^{2}\bar{z}^{2}+6z\bar{z}^{3}-3\bar{z}^{4}+cz^{3}\bar{z}.
		\]
		Again, $c$ is a parameter to be determined, using the equation of $h_{3}.$
		
		Now $h_{3}$ satisfies%
		\begin{align*}
			&  2D_{z}g_{4}\cdot h_{3}+2D_{z}g_{3}\cdot h_{4}+2D_{z}g_{2}\cdot h_{5}%
			+2D_{z}g_{1}\cdot h_{6}\\
			&  -\sqrt{3}D_{x}^{2}g_{4}\cdot h_{4}-\sqrt{3}D_{x}^{2}g_{3}\cdot h_{5}%
			-\sqrt{3}D_{x}^{2}g_{2}\cdot h_{6}=0,
		\end{align*}
		and%
		\begin{align*}
			&  2D_{z}g_{4}\cdot h_{3}+2D_{z}g_{3}\cdot h_{4}+2D_{z}g_{2}\cdot h_{5}%
			+2D_{z}g_{1}\cdot h_{6}+\sqrt{3}iD_{x}D_{y}g_{4}\cdot h_{4}\\
			&  +\sqrt{3}iD_{x}D_{y}g_{3}\cdot h_{5}+\sqrt{3}iD_{x}D_{y}g_{2}\cdot
			h_{6}-D_{x}^{3}g_{4}\cdot h_{5}-D_{x}^{3}g_{3}\cdot h_{6}=0.
		\end{align*}
		The compatibility implies that $c=6,$ and we deduce that
		\[
		h_{4}=-3z^{4}+15z^{2}\bar{z}^{2}+6z\bar{z}^{3}-3\bar{z}^{4}+6z^{3}\bar{z}.
		\]

		Having obtained $h_{4},$ we then proceed to solve the equation for $h_{3}$ and
		find that for some parameter $\beta,$%
		\[
		h_{3}=24\sqrt{3}z^{2}\bar{z}-24\sqrt{3}z\bar{z}^{2}+\left(  3+\frac{\alpha
		}{\sqrt{3}}\right)  \sqrt{3}\bar{z}^{3}+\beta z^{3}.
		\]
		The rest of terms $h_{2},h_{1},h_{0}$ follow from routine computation, and it
		turns out that $\beta$ is a free parameter. We omit the details.\ \ 
	\end{proof}
	
	Observe that the function $h$ given by Proposition \ref{degree6} is not real
	valued. But we are mainly interested in real valued solutions. In view of
	this, we replace $y$ by $y-\frac{2\sqrt{3}i}{3},$ and choose the complex
	parameters $\alpha,\beta$ such that
	\begin{align*}
		\alpha-\beta &  =Ai-\frac{211}{3\sqrt{3}},\\
		\alpha+\beta &  =B-3\sqrt{3},
	\end{align*}
	where $A,B$ are real numbers. Then the function $h$ becomes
	\begin{align*}
		h_{A,B}\left(  x,y\right)  :=~&x^{6}+3x^{4}y^{2}+3x^{2}y^{4}+y^{6}%
		+25x^{4}+90x^{2}y^{2}+17y^{4}\\
		&  +Bx^{3}+3Ax^{2}y-3Bxy^{2}-Ay^{3}-125x^{2}+475y^{2}\\
		&  -Bx+5Ay+1875+\frac{A^{2}}{4}+\frac{B^{2}}{4}.
	\end{align*}
	This is a family of real valued solution. Note that translation along the $x$
	or $y$ direction still yields a solution(the rotation will not), hence there
	are all together $4$ free real parameters in the whole family of solutions.
	Now if
	\[
	A=0\text{ and }B=0,
	\]
	then $h$ will be an even solution, which equals
	\begin{align*}
		&  x^{6}+3x^{4}y^{2}+3x^{2}y^{4}+y^{6}+25x^{4}+90x^{2}y^{2}+17y^{4}\\
		&  -125x^{2}+475y^{2}+1875.
	\end{align*}
	Note that this function is not radially symmetry. For general parameters
	$A,B$, the solution does not have any symmetry.
	
	We should emphasize that although $h_{A,B}$ is real valued, it is connected by
	Backlund transformation, via a degree $4$ polynomial, to the following degree
	$2$ polynomial:
	\[
	x^{2}+\left(  y-\frac{2\sqrt{3}i}{3}\right)  ^{2}+3,
	\]
	which is not real valued.
	
	\subsection{The general case}
	
	Our construction can then be iterated to created higher energy solutions(Note
	that energy is completely determined by the degree, \cite{Pelinovskii93}). One
	can indeed directly write down an algorithm to do this computation. At each
	stage, one find the solutions from their highest degree terms to lower degree
	terms. More precisely, suppose we have already found a polynomial solution $f$
	to the bilinear equation, with highest degree term equals $z^{n}\bar{z}^{n},$
	and we want to find its Backlund transformation $g.$ Then once we have found
	$g_{j}$ for $j>m,$ then from the Backlund transformation, we see that $g_{m}$
	will satisfy a system of equations of the form
	\begin{align*}
		D_{\bar{z}}\left(  z^{n}\bar{z}^{n}\right)  \cdot g_{m}  &  =\text{RHS}%
		_{1}\text{,}\\
		D_{\bar{z}}\left(  z^{n}\bar{z}^{n}\right)  \cdot g_{m}  &  =\text{RHS}%
		_{2}\text{,}%
	\end{align*}
	where RHS$_{1}$ and RHS$_{2}$ contain terms explicitly polynomial terms from
	$g_{j}$ with $j>m.$
	
	\medskip
	
	An important question is, in which form, should a free parameter appear.
	Assume that the term $g_{m+1}$ has a parameter term $\sigma z^{j}\bar{z}^{n}$
	to be determined, where $j+n=m+1.$ The compatibility of these two equations is
	RHS$_{1}-$RHS$_{2}=0.$ In this equation, the parameter $\sigma$ appears as
	\[
	\sigma\left(  D_{x}^{2}-iD_{x}D_{y}\right)  \left(  z^{n}\bar{z}^{n}\right)
	\cdot\left(  z^{j}\bar{z}^{n}\right)  .
	\]
	Direct computation tells us that this equals
	\[
	\sigma\left[  n\left(  n-1\right)  -2nj+j\left(  j-1\right)  \right]
	z^{j+n-2}\bar{z}^{2n}.
	\]
	Hence we conclude that free parameter(that is, no restriction on $\sigma$) can
	occur only if $j$ satisfies
	\[
	n\left(  n-1\right)  -2nj+j\left(  j-1\right)  =0.
	\]
	In the case of $n=3,$ the free parameter term is $\sigma z\bar{z}^{3}.$
	
	Next, let us assume that the polynomial solution $g,$ with highest degree term
	$z^{p}\bar{z}^{n}$ has been found. We would like to find function $h$ by
	another Backlund transformation. Similar as above, once we have found $h_{j}$
	for $j>m,$ then $h_{m}$ will satisfy a system of equations of the form
	\begin{align*}
		D_{z}\left(  z^{p}\bar{z}^{n}\right)  \cdot h_{m}  &  =\text{RHS}_{1}%
		\text{,}\\
		D_{z}\left(  z^{p}\bar{z}^{n}\right)  \cdot h_{m}  &  =\text{RHS}_{2}\text{.}%
	\end{align*}
	Assume that $h_{m+1}$ has a parameter term $\alpha z^{p}\bar{z}^{j}$ to be
	determined. Then $\alpha$ enters into the compatibility condition as
	\[
	\alpha\left(  D_{x}^{2}+iD_{x}D_{y}\right)  \left(  z^{p}\bar{z}^{n}\right)
	\cdot\left(  z^{p}\bar{z}^{j}\right)  .
	\]
	This equals%
	\[
	\alpha\left[  n\left(  n-1\right)  -2nj+j\left(  j-1\right)  \right]
	z^{2p}\bar{z}^{j+n-2}.
	\]
	Therefore, again, free parameter can occur only if
	\[
	n\left(  n-1\right)  -2nj+j\left(  j-1\right)  =0.
	\]
	In the case of $n=3,$ the free parameter term is $\alpha z^{6}\bar{z}.$
	
	\begin{remark}
		Our algorithm tells us that, at least formally, if we consider those
		solutions(complex valued) whose leading term is $z^{n}\bar{z}^{n},$ where
		$n=\frac{1}{2}k\left(  k+1\right)  ,$ then the space of these solutions should
		have complex dimension $2k.$ Moreover, the space of real valued solutions with
		degree $k\left(  k+1\right)  $ is expected to have real dimension $2k.$
	\end{remark}
	
	\section{Nondegeneracy and Morse index of degree $6$ solutions}
	
	In this section, we will show that the family of real valued solutions
	$u_{A,B}$ to the Boussinesq equation corresponding to $h_{A,B}$ have Morse
	index $3.$
	
	\medskip
	
	Our starting point in the computation of Morse index is to analyze the
	asymptotic behavior of $u=u_{0,B}$ for $B$ large.
	
	In view of the fact that
	\[
	u=2\partial_{x}^{2}\ln h_{0,B}=2\frac{h_{0,B}\partial_{x}^{2}h_{0,B}-\left(
		\partial_{x}h_{0,B}\right)  ^{2}}{h_{0,B}^{2}},
	\]
	for $B$ large, the maximum of $u$ should take place around the points $\left(
	x,y\right)  $ which solve the system of algebraic equations:
	\begin{equation}
		\label{peak}
		\begin{cases}
			\phi=0,\\
			\partial_{x}\phi=0,
		\end{cases}
	\end{equation}
	where $\phi$ designates the main order term of $h_{0,B}$(away from the maximum
	of $u$) and is defined by
	\[
	\phi\left(  x,y\right)  =x^{6}+3x^{4}y^{2}+3x^{2}y^{4}+y^{6}+Bx^{3}%
	-3Bxy^{2}+\frac{B^{2}}{4}.
	\]
	Solving $\left(  \ref{peak}\right)  $ and setting $\gamma=\left(  \frac{B}%
	{2}\right)  ^{\frac{1}{3}},$ we obtain the following three points $P_{j}$ on
	the $\left(  x,y\right)  $ plane:
	\begin{equation}
		P_{1}=\left(  -\gamma,0\right)  ,\text{ }P_{2}=\frac{1}{2}\left(
		\gamma,-\sqrt{3}\gamma\right)  ,\text{ }P_{3}=\frac{1}{2}\left(  \gamma
		,\sqrt{3}\gamma\right)  . \label{Pj}%
	\end{equation}
	These three points are the vertices of an equilateral triangle. This is in
	agreement with the formal computation for the dynamics of peaks of KP-I
	equation, carried out in \cite{Pelinovskii93}. The reason that $P_{j}$ are in
	this position will be clear later on.
	
	Let us set
	\[
	L\left(  x,y\right)  =x^{2}+y^{2}+3.
	\]
	Recall that we use $U$ to denote the classical lump solution. That is,%
	\begin{equation}
		U\left(  x,y\right)  =2\partial_{x}^{2}\ln L=4\frac{y^{2}-x^{2}+3}{\left(
			x^{2}+y^{2}+3\right)  ^{2}}. \label{lumpc}%
	\end{equation}
	Then we define
	\[
	L_{j}\left(  \cdot\right)  =L\left(  \cdot-P_{j}\right)  ,\text{ and }%
	U_{j}=2\partial_{x}^{2}L_{j},\text{ \ }j=1,2,3.
	\]
	
	\medskip
	The following result describes the asymptotic behavior of $u$ as $B$(or
	$\gamma$) tends to $+\infty.$
	
	\begin{lemma}
		\label{er}The error between $u$ and $U_{1}+U_{2}+U_{3}$ satisfies
		\[
		\left\Vert u-\left(  U_{1}+U_{2}+U_{3}\right)  \right\Vert _{L^{\infty}\left(
			\mathbb{R}^{2}\right)  }\rightarrow0,\text{ as }B\rightarrow+\infty.
		\]
		
	\end{lemma}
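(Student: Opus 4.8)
The plan is to exploit a factorization of the leading polynomial $\phi$ that makes the three-peak structure transparent. Writing $z = x+iy$ and recalling that $\gamma^{3}=B/2$, one checks directly that
\[
\phi=(x^{2}+y^{2})^{3}+B(x^{3}-3xy^{2})+\tfrac{B^{2}}{4}=\left\vert z^{3}+\gamma^{3}\right\vert^{2}=\prod_{j=1}^{3}\left\vert z-P_{j}\right\vert^{2},
\]
where $P_{1},P_{2},P_{3}$ are exactly the three cube roots of $-\gamma^{3}$. This both explains why the $P_{j}$ in $(\ref{Pj})$ are the vertices of an equilateral triangle and shows that $\phi$ vanishes precisely, and to second order, at the three peaks. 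The whole argument then splits $\mathbb{R}^{2}$ into three peak regions $\Omega_{j}=\{\left\vert z-P_{j}\right\vert<\rho\}$ at an intermediate scale $1\ll\rho\ll\gamma$ (for instance $\rho=\sqrt{\gamma}$) together with the complementary far region, and I estimate $u-(U_{1}+U_{2}+U_{3})$ separately on each.

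First, the peak regions. Near $P_{j}$ I introduce the bounded local coordinates $\xi=x-x_{j}$, $\eta=y-y_{j}$ and Taylor expand both $\phi$ and the lower order remainder $h_{0,B}-\phi=25x^{4}+90x^{2}y^{2}+17y^{4}-125x^{2}+475y^{2}-Bx+1875$. Since the two peaks other than $P_{j}$ sit at distance $\sqrt{3}\,\gamma$, the two corresponding factors of $\phi$ each contribute $3\gamma^{2}\bigl(1+O(\rho/\gamma)\bigr)$, so $\phi=9\gamma^{4}(\xi^{2}+\eta^{2})\bigl(1+O(\rho/\gamma)\bigr)$; meanwhile the $\gamma^{4}$-order part of $h_{0,B}-\phi$ collapses to the constant $27\gamma^{4}$. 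This cancellation is the nontrivial input: for example $25\gamma^{4}+2\gamma^{4}=27\gamma^{4}$ at $P_{1}$, and $(25+270+153)\gamma^{4}/16-\gamma^{4}=27\gamma^{4}$ at $P_{2}$, and it is exactly what produces the additive $3$. Hence
\[
h_{0,B}=9\gamma^{4}\bigl(L_{j}+E_{j}\bigr),\qquad L_{j}=\xi^{2}+\eta^{2}+3,
\]
with $E_{j}$ and its $x$-derivatives up to order two being $O(\rho/\gamma)$ relative to $L_{j}$. Because the prefactor $9\gamma^{4}$ disappears under $\partial_{x}^{2}\ln$, this gives $u=2\partial_{x}^{2}\ln(L_{j}+E_{j})=U_{j}+o(1)$ uniformly on $\Omega_{j}$, while each $U_{k}$ with $k\neq j$ is $O(\gamma^{-2})$ there since $\left\vert z-P_{k}\right\vert\geq\gamma$. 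Thus $u-(U_{1}+U_{2}+U_{3})\to0$ uniformly on every $\Omega_{j}$.

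Second, the far region $\min_{j}\left\vert z-P_{j}\right\vert\geq\rho$. Here I use the decomposition $\ln h_{0,B}=\ln\phi+\ln\!\bigl(1+(h_{0,B}-\phi)/\phi\bigr)$, legitimate because $\phi>0$ off the peaks. For the first piece the factorization gives the closed form $2\partial_{x}^{2}\ln\phi=-4\sum_{j}\operatorname{Re}(z-P_{j})^{-2}$, which is $O(\rho^{-2})$ on the far region. For the second piece I need $\psi:=(h_{0,B}-\phi)/\phi$ small in $C^{2}$ in $x$; the crucial point is that, because the peaks are a distance $\sqrt{3}\,\gamma$ apart, at most one factor of $\phi=\prod_{j}\left\vert z-P_{j}\right\vert^{2}$ can be small at any point, whence $\phi\geq c\,\rho^{2}\gamma^{4}$ throughout the far region, whereas $\left\vert h_{0,B}-\phi\right\vert\lesssim\gamma^{4}+\left\vert z\right\vert^{4}$ by Young's inequality applied to the $-Bx$ term. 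Degree counting then yields $\psi=o(1)$ together with the same decay for $\partial_{x}\psi$ and $\partial_{x}^{2}\psi$, so $2\partial_{x}^{2}\ln(1+\psi)=o(1)$. Combined with the elementary bound $U_{k}=O(\left\vert z-P_{k}\right\vert^{-2})=O(\rho^{-2})$, this shows that $u$ and $U_{1}+U_{2}+U_{3}$ are both uniformly small on the far region.

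Finally I assemble the estimate: given $\varepsilon>0$, the far-region bounds are made smaller than $\varepsilon$ by taking $\rho$, and hence $B$, large, and the peak-region errors are made smaller than $\varepsilon$ by taking $\gamma$ large at the chosen scale $\rho=\sqrt{\gamma}$; this yields $\left\Vert u-(U_{1}+U_{2}+U_{3})\right\Vert_{L^{\infty}(\mathbb{R}^{2})}\to0$. I expect the main obstacle to be the uniform $C^{2}$ control of $\psi=(h_{0,B}-\phi)/\phi$ across the intermediate and far region: one must ensure that the lower bound for $\phi$ genuinely uses the mutual separation $\sqrt{3}\,\gamma$ of the peaks (a crude bound such as $\phi\geq\rho^{6}$ is false in the worst case and too weak), and that the numerators produced by differentiating $\psi$ do not destroy the decay. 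The peak-region cancellation producing the ``$+3$'' is computationally the most delicate input, but it reduces to a finite check at each $P_{j}$.
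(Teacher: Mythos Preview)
Your argument is correct and will go through; the $C^{2}$ control of $\psi$ in the far region that you flag as the main obstacle is routine once you have the lower bound $\phi\geq c\rho^{2}\gamma^{4}$, since each $x$-derivative either lowers the degree of the numerator or introduces a factor $\partial_{x}\phi/\phi$ which is itself $O(\rho^{-1})$ in the far region.

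However, your route is genuinely different from the paper's, and it is worth seeing the comparison. The paper avoids any region splitting by choosing a better global comparison function: instead of the leading part $\phi$, it compares $h$ directly with the \emph{product} $L_{1}L_{2}L_{3}$ of the three translated lump tau functions, so that
\[
u-(U_{1}+U_{2}+U_{3})=-2\partial_{x}^{2}\ln\!\left(1-\frac{h-L_{1}L_{2}L_{3}}{h}\right).
\]
Because the leading degree-$6$ part of $L_{1}L_{2}L_{3}$ is exactly your $\phi=\prod_{j}\lvert z-P_{j}\rvert^{2}$, the difference $\eta:=h-L_{1}L_{2}L_{3}$ is a degree-$4$ polynomial that can be written down explicitly; the delicate cancellation you found (the ``$+3$'' producing $27\gamma^{4}$) is then encoded in the single check that $\eta(P_{j})=O(\gamma^{2})$ while $h(P_{j})\sim\gamma^{4}$, whence $\eta/h\to 0$ in $L^{\infty}$ globally. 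This handles peak and far regions in one stroke and spares you the $C^{2}$ bookkeeping on $\psi$. Your approach, by contrast, buys more structural insight: the factorization $\phi=\lvert z^{3}+\gamma^{3}\rvert^{2}$ explains the equilateral configuration conceptually rather than by inspection, and the local expansion $h=9\gamma^{4}(L_{j}+E_{j})$ would generalize more readily to higher-degree tau functions where an explicit formula for $h-\prod L_{j}$ is not available.
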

	
	\begin{proof}
		We have%
		\begin{align*}
			u-\left(  U_{1}+U_{2}+U_{3}\right)   &  =2\partial_{x}^{2}\left(  \ln h-\ln
			L_{1}-\ln L_{2}-\ln L_{3}\right) \\
			&  =-2\partial_{x}^{2}\left[  \ln\left(  1-\frac{h-L_{1}L_{2}L_{3}}{h}\right)
			\right]  .
		\end{align*}
		Direct computation tells us that
		\begin{align*}
			\eta  :=~& h-L_{1}L_{2}L_{3}\\
			=~&16x^{4}+72x^{2}y^{2}+8y^{4}-\left(  152+9\gamma^{2}\right)  x^{2}\\
			&  +\left(  448-9\gamma^{2}\right)  y^{2}-Bx+1848-27\gamma^{2}-9\gamma^{4}.
		\end{align*}
		Observe that at $P_{j},$ the main contribution to $h$ comes from those degree
		$4$ terms. Therefore, to estimate the error around $P_{j},$ we need to have a
		better control of $\eta$ at $P_{j}.$ It turns out that $\eta\left(
		P_{j}\right)  $ is of the order $O\left(  \gamma^{2}\right)  .$ More
		precisely,
		\begin{align*}
			\eta\left(  P_{1}\right) =-179\gamma^{2}+1848,\quad
			\eta\left(  P_{2}\right) =\eta\left(  P_{3}\right)  =271\gamma^{2}+1848.
		\end{align*}
		From these, we then conclude with little work that%
		\[
		\left\Vert \frac{h-L_{1}L_{2}L_{3}}{h}\right\Vert _{L^{\infty}\left(
			\mathbb{R}^{2}\right)  }\rightarrow0,
		\]
		which readily implies
		\[
		\left\Vert u-\left(  U_{1}+U_{2}+U_{3}\right)  \right\Vert _{L^{\infty}\left(
			\mathbb{R}^{2}\right)  }\rightarrow0,\text{ as }B\rightarrow+\infty.
		\]
		This finishes the proof.
	\end{proof}
	
	Lemma \ref{er} provides a rough picture of the solution $u.$ However, to
	analyze its Morse index, we need more precise expansion of $u.$ To obtain the
	required expansion, it turns out that the explicit form of the function $h$
	does not help too much. Therefore we use the mapping property of the
	linearized Boussinesq operator. This will be done in sequel.
	
	\medskip
	
	For a function $q,$ we use $\mathcal{L}_{q}$ to denote the linearized
	Boussinesq operator at $q,$ with the following form:
	\[
	\mathcal{L}_{q}\eta=\partial_{x}^{2}\eta-\eta+6q\eta-\partial_{x}^{-2}%
	\partial_{y}^{2}\eta.
	\]
	Here $\partial_{x}^{-1}=\int_{-\infty}^{x}.$ One of the reason that we
	integrate twice in the original form of the Boussinesq equation is that the
	operator $\mathcal{L}_{q}$ is self adjoint. We emphasize that in view of the
	definition of $\partial_{x}^{-1},$ one should be very careful about the
	integrability of the function.
	
	We let $\left(  x_{j}^{\ast},y_{j}^{\ast}\right)  $ be the point close to  $P_{j}=\left(  \tilde{x}_{j},\tilde{y}_{j}\right)  $, which is introduced in
	$\left(  \ref{Pj}\right)  .$ Setting
	\[
	U_{j}^{\ast}\left(  x,y\right)  :=U\left(  x-x_{j}^{\ast},y-y_{j}^{\ast
	}\right)
	\]
	and writing
	\[
	u=U^{\ast}+\xi,
	\]
	where $U^{\ast}$ is the \textquotedblleft approximate\textquotedblright%
	\ solution defined by
	\[
	U^{\ast}=U_{1}^{\ast}+U_{2}^{\ast}+U_{3}^{\ast},
	\]
	and $\xi$ is a perturbation term satisfying the following orthogonality
	condition: For $k=1,2,3,$%
	
	\[
	\int_{\mathbb{R}^{2}}\xi\partial_{x}U_{k}^{\ast}dxdy=0\text{ \ \ and \ }%
	\int_{\mathbb{R}^{2}}\xi\partial_{y}U_{k}^{\ast}dxdy=0.
	\]
	This can always be achieved by perturbing $\left(  \tilde{x}_{k},\tilde{y}%
	_{k}\right)  $ into $\left(  x_{k}^{\ast},y_{k}^{\ast}\right)  .$ Note that
	$\partial_{x}U,\partial_{y}U$ are kernels of the operator $\mathcal{L}_{U}.$
	Hence $\partial_{x}U_{k}^{\ast},\partial_{y}U_{k}^{\ast}$ are
	\textquotedblleft approximate\textquotedblright\ kernels of $\mathcal{L}%
	_{U^{\ast}}.$
	
	By Lemma \ref{er}, if the distance between $\left(  x_{j}^{\ast},y_{j}^{\ast
	}\right)  $ and $P_{j}$ is close enough, then $\left\Vert \xi\right\Vert
	_{L^{\infty}\left(  \mathbb{R}^{2}\right)  }$ will be small, provided that $B$
	is large. Since $u$ satisfies the Boussinesq equation, the perturbation $\xi$
	should satisfy the following nonlinear equation%
	\begin{equation}
		\mathcal{L}_{U^{\ast}}\xi=-E\left(  U^{\ast}\right)  -3\xi^{2}, \label{pert}%
	\end{equation}
	where $E\left(  U^{\ast}\right)  $ is the \textquotedblleft
	error\textquotedblright\ of the approximate solution $U^{\ast}:$
	\begin{align}
		E\left(  U^{\ast}\right)   &  =\partial_{x}^{2}U^{\ast}-U^{\ast}+3U^{\ast
			2}-\partial_{x}^{-2}\partial_{y}^{2}U^{\ast}\nonumber\\
		&  =6\left(  U_{1}^{\ast}U_{2}^{\ast}+U_{2}^{\ast}U_{3}^{\ast}+U_{1}^{\ast
		}U_{3}^{\ast}\right)  . \label{erofa}%
	\end{align}
	We see that essentially $E\left(  U^{\ast}\right)  $ gives the interaction
	between $U_{j}^{\ast},$ and therefore the presence of $\xi$ is due to this
	interaction. $E\left(  U^{\ast}\right)  $ is of the order $O\left(
	\gamma^{-2}\right)  .$
	
	We introduce the complex numbers $z_{j}^{\ast}=x_{j}^{\ast}+iy_{j}^{\ast
	},j=1,2,3,$ and define
	\begin{equation}
		d^{\ast}=24\int_{\mathbb{R}^{2}}U^{2}dxdy.\label{cstar}%
	\end{equation}
	An important ingredient of the analysis is to understand the projection of the
	error $E=E\left(  U^{\ast}\right)  $ onto the kernels. This is the content of
	the following
	
	\begin{lemma}
		\label{proj}There holds
		\begin{equation}
			\label{proony-1}
			\int_{\mathbb{R}^{2}}E\partial_{x}U_{j}^{\ast}dxdy=-d^{\ast}\sum\limits_{k\neq j}\operatorname{Re}
			\frac{1}{\left(  z_{j}^{\ast}-z_{k}^{\ast}\right)  ^{3}%
			}+O\left(  \gamma^{-4}\right) ,\quad j=1,2,3,
		\end{equation}
		and
		\begin{equation}
			\label{proony}
			\int_{\mathbb{R}^{2}}E\partial_{y}U_{j}^{\ast}dxdy=d^{\ast}\sum\limits_{k\neq j}\operatorname{Im}%
			\frac{1}{\left(  z_{j}^{\ast}-z_{k}^{\ast}\right)  ^{3}%
			}+O\left(  \gamma^{-4}\right),  \quad j=1,2,3.
		\end{equation}
		
	\end{lemma}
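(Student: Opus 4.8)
The plan is to compute the projections directly from the explicit interaction formula $E(U^{\ast})=6(U_1^{\ast}U_2^{\ast}+U_2^{\ast}U_3^{\ast}+U_1^{\ast}U_3^{\ast})$ established in $\left(\ref{erofa}\right)$, exploiting the fact that the three bumps $U_j^{\ast}$ are centered at points $P_j$ whose mutual distances are of order $\gamma$, hence large. The key structural observation is that each cross term $U_j^{\ast}U_k^{\ast}$, when integrated against $\partial_x U_j^{\ast}$, concentrates near the center $z_j^{\ast}$, where $\partial_x U_j^{\ast}$ lives; there the distant bump $U_k^{\ast}$ is smooth and slowly varying, so we may Taylor-expand it around $z_j^{\ast}$. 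Thus the first step is to write, for $k\neq j$,
\[
\int_{\mathbb{R}^2} U_j^{\ast}U_k^{\ast}\,\partial_x U_j^{\ast}\,dx\,dy
=\int_{\mathbb{R}^2} U_k^{\ast}(x,y)\,U_j^{\ast}\partial_x U_j^{\ast}\,dx\,dy,
\]
and replace $U_k^{\ast}$ by its Taylor polynomial about $P_j\approx z_j^{\ast}$.

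Next I would extract the leading contribution. Since $U_j^{\ast}\partial_x U_j^{\ast}=\tfrac12\partial_x\big((U_j^{\ast})^2\big)$ is an odd-type object whose integral against a constant vanishes, the constant term of the Taylor expansion drops out; the surviving leading term comes from derivatives of $U_k^{\ast}$ evaluated at $z_j^{\ast}$. The decay of $U$ like $|z|^{-2}$ means $U_k^{\ast}(z_j^{\ast})=O(\gamma^{-2})$ and its derivatives decay faster, so one must keep track of which derivative of $U_k^{\ast}$ pairs nontrivially with the moments of $U\partial_x U$. The cleanest route is to use the complex structure: writing $U=2\partial_x^2\ln L$ with $L=x^2+y^2+3$ and passing to $z=x+iy$, the far-field of $U_k^{\ast}$ is governed by the harmonic-type tail $\operatorname{Re}\,(z-z_k^{\ast})^{-2}$, and its relevant derivative produces exactly the factor $(z_j^{\ast}-z_k^{\ast})^{-3}$. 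The constant $d^{\ast}=24\int_{\mathbb{R}^2}U^2$ in $\left(\ref{cstar}\right)$ should emerge as the universal moment $\int U\,\partial_x(\text{tail})$ after the angular/real-part bookkeeping, with the sign and the $\operatorname{Re}$ versus $\operatorname{Im}$ split in $\left(\ref{proony-1}\right)$ and $\left(\ref{proony}\right)$ reflecting whether we test against $\partial_x U_j^{\ast}$ or $\partial_y U_j^{\ast}$ (i.e. the real versus imaginary part of the complex gradient $\partial_z$).

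I would organize the bookkeeping by noting that $\partial_x U_j^{\ast}$ and $\partial_y U_j^{\ast}$ are the real and imaginary parts of $2\partial_z U_j^{\ast}$, so the two identities are the real and imaginary components of a single complex identity
\[
\int_{\mathbb{R}^2} E\,\partial_{\bar z}U_j^{\ast}\,dx\,dy
=-d^{\ast}\sum_{k\neq j}\frac{1}{(z_j^{\ast}-z_k^{\ast})^{3}}+O(\gamma^{-4}),
\]
which both halves the work and makes the cubic denominator transparent. The error term $O(\gamma^{-4})$ then accounts for (i) the next Taylor order of $U_k^{\ast}$, which contributes an extra $\gamma^{-1}$ beyond the leading $\gamma^{-3}$, and (ii) the replacement of $z_j^{\ast}$ by the nearby $P_j$, which is harmless to this order. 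The main obstacle I anticipate is not conceptual but the precise evaluation of the universal moment and its normalization: one must verify that the weighted integral of $U$ against the dipole tail of a neighboring lump produces exactly the coefficient $d^{\ast}=24\int U^2$, with the correct sign, rather than some other multiple of $\int U^2$. This requires either an integration-by-parts identity relating $\int U\,\partial_x^2(\cdots)$ back to $\int U^2$, or a direct residue-type computation in the $z$ variable; getting that constant and sign right is the delicate part, and everything else is controlled by the large-separation expansion.
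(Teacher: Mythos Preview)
Your proposal is correct and follows essentially the same route as the paper: integrate by parts via $U_j^{\ast}\partial_x U_j^{\ast}=\tfrac12\partial_x(U_j^{\ast})^2$, then replace the far bump by its leading tail using the identity $\frac{y^2-x^2}{(x^2+y^2)^2}=-\tfrac12(z^{-2}+\bar z^{-2})$, which yields $\partial_x U_k^{\ast}\approx 8\operatorname{Re}(z-z_k^{\ast})^{-3}$ and hence the coefficient $-3\cdot 8\int U^2=-d^{\ast}$. The only item you do not mention explicitly is the ``pure cross'' term $\int U_k^{\ast}U_l^{\ast}\partial_x U_j^{\ast}$ with $k,l\neq j$, which the paper disposes of directly as $O(\gamma^{-4})$ by decay; otherwise your outline matches the paper's proof (your complex repackaging is a cosmetic convenience, and the sign/conjugation in your combined identity needs a small correction, but the method is the same).
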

	
	\begin{proof}
		We shall prove \eqref{proony-1} and \eqref{proony} for the case $j=1$, the other ones can be treated similarly. For the former one, using \eqref{erofa} we get
		\[
		\int_{\mathbb{R}^{2}}E\partial_{x}U_{1}^{\ast}dxdy=6\int_{\mathbb{R}^{2}%
		}\left(  U_{1}^{\ast}U_{2}^{\ast}+U_{1}^{\ast}U_{3}^{\ast}+U_{2}^{\ast}%
		U_{3}^{\ast}\right)  \partial_{x}U_{1}^{\ast}dxdy.
		\]
		Let us compute each integral appeared in the right hand side. Integrating by
		parts, we get
		\begin{equation}
			\int_{\mathbb{R}^{2}}U_{2}^{\ast}U_{1}^{\ast}\partial_{x}U_{1}^{\ast
			}dxdy=-\frac{1}{2}\int_{\mathbb{R}^{2}}U_{1}^{\ast2}\partial_{x}U_{2}^{\ast
			}dxdy. \label{I1}%
		\end{equation}
		To estimate this integral, we need the identity
		\begin{equation}
			\frac{y^{2}-x^{2}}{\left(  x^{2}+y^{2}\right)  ^{2}}=-\frac{1}{2}\left(
			\frac{1}{z^{2}}+\frac{1}{\bar{z}^{2}}\right)  , \label{ident}
		\end{equation}
		which implies
		\begin{equation}\label{dy}
			\partial_{x}\left(  \frac{y^{2}-x^{2}}{\left(  x^{2}+y^{2}\right)  ^{2}%
			}\right)  =2\operatorname{Re}\frac{1}{z^{3}}
			,\quad \partial_{y}\left(  \frac{y^{2}-x^{2}}{\left(  x^{2}+y^{2}\right)  ^{2}%
			}\right)  =-2\operatorname{Im}\frac{1}{z^{3}},
		\end{equation}
		\[
		\partial_{x}^2\left(  \frac{y^{2}-x^{2}}{\left(  x^{2}+y^{2}\right)  ^{2}%
		}\right)  =-6\operatorname{Re}\frac{1}{z^{4}},
		\quad 
		\partial_{y}^2\left(  \frac{y^{2}-x^{2}}{\left(  x^{2}+y^{2}\right)  ^{2}%
		}\right)  =6\operatorname{Re}\frac{1}{z^{4}},
		\]
		and
		\[
		\label{dxy-1}
		\partial_{x}\partial_y\left(  \frac{y^{2}-x^{2}}{\left(  x^{2}+y^{2}\right)  ^{2}%
		}\right)  =6\operatorname{Im}\frac{1}{z^{4}}.	
		\]
		We then use the explicit form of the lump solution $U$ to deduce that within a
		ball of radius $\frac{\gamma}{2}$ centered at $P_{1},$ $\ $%
		\[
		\left\vert \partial_{x}U_{2}^{\ast}-8\operatorname{Re}\frac{1}{z^{3}%
		}\right\vert =O\left(  \gamma^{-4}\right)  .
		\]
		Inserting this estimate into $\left(  \ref{I1}\right)  ,$ we then get
		\[
		6\int_{\mathbb{R}^{2}}U_{2}^{\ast}U_{1}^{\ast}\partial_{x}U_{1}^{\ast
		}dxdy=-d^{\ast}\operatorname{Re}\frac{1}{\left(  z_{1}^{\ast}-z_{2}^{\ast
			}\right)  ^{3}}+O\left(  \gamma^{-4}\right)  ,
		\]
		where $d^{\ast}$ is defined in $\left(  \ref{cstar}\right)  .$
		
		Similarly,
		\[
		6\int_{\mathbb{R}^{2}}U_{3}^{\ast}U_{1}^{\ast}\partial_{x}U_{1}^{\ast
		}dxdy=-d^{\ast}\operatorname{Re}\frac{1}{\left(  z_{1}^{\ast}-z_{3}^{\ast
			}\right)  ^{3}}+O\left(  \gamma^{-4}\right)  .
		\]
		Moreover, we use the decay of $U_{2}^{\ast},U_{3}^{\ast}$ to conclude directly
		that
		\[
		\int_{\mathbb{R}^{2}}U_{2}^{\ast}U_{3}^{\ast}\partial_{x}U_{1}^{\ast
		}dxdy=O\left(  \gamma^{-4}\right)  .
		\]
		Combining all these estimates, we then get
		\[
		\int_{\mathbb{R}^{2}}E\partial_{x}U_{1}^{\ast}dxdy=-d^{\ast}\operatorname{Re}%
		\frac{1}{\left(  z_{1}^{\ast}-z_{2}^{\ast}\right)  ^{3}}-d^{\ast
		}\operatorname{Re}\frac{1}{\left(  z_{1}^{\ast}-z_{3}^{\ast}\right)  ^{3}%
		}+O\left(  \gamma^{-4}\right)  .
		\]

		The equation $\left(  \ref{proony}\right)  $ can be obtained in a very similar
		way, using the second identity of $\left(  \ref{dy}\right)  .$
	\end{proof}
	
	In view of Lemma \ref{proj}, the position $z_{j}^{\ast}$ of the single lumps
	should approximately satisfy the following balancing condition: For each fixed
	$j,$%
	\begin{equation}
		\sum\limits_{k\neq j}\frac{1}{\left(  z_{j}^{\ast}-z_{k}^{\ast}\right)  ^{3}%
		}=0. \label{balan}%
	\end{equation}
	As we have mentioned above, this condition has already been observed in
	Section 3 of \cite{Pelinovskii93} for the KP-I equation, from a more
	physically inspired point of view. On the other hand, the space $M$ of points
	$z_{j}^{\ast}$ satisfying these balancing equations $\left(  \ref{balan}%
	\right)  $ has been investigated in \cite{Moser}, where rational solutions of
	the KdV equation has been studied.
	
	Taking into account of  the previous computation, we shall define the map
	\[
	\mathcal{F}:\left(  z_{1},z_{2},z_{3}\right)  ^{T}\rightarrow\left(
	F_{1},F_{2},F_{3}\right)  ^{T},\text{ }z_{j}\in\mathbb{C},
	\]
	where
	\[
	F_{j}=%
	{\displaystyle\sum\limits_{k\neq j}}
	\frac{1}{\left(  z_{j}-z_{k}\right)  ^{3}},\text{ }j=1,2,3.
	\]
	Note that for $\tilde{z}_{1}:=-1,\tilde{z}_{2}:=\frac{1+\sqrt{3}i}{2}%
	,\tilde{z}_{3}:=\frac{1-\sqrt{3}i}{2},$ we have%
	\[
	\mathcal{F}\left(  \tilde{z}_{1},\tilde{z}_{2},\tilde{z}_{3}\right)  =0.
	\]
	The linearization of $\mathcal{F}$ will play important role in our later
	analysis. The derivative $D\mathcal{F}$ of $\mathcal{F}$ at $\left(
	z_{1},z_{2},z_{3}\right)  $ is a matrix of the form $\left[  F_{j,k}\right]
	,$ where $F_{j,k}=\partial_{z_{k}}F_{j}$. 
	
	The next lemma follows from direct
	computation of eigenvectors.
	
	\begin{lemma}
		\label{vector}The kernels of $M:=$ $D\mathcal{F}|_{\left(  \tilde{z}%
			_{1},\tilde{z}_{2},\tilde{z}_{3}\right)  }$ are given by
		\[
		c_{1}\mathbf{b}_{1}+c_{2}\mathbf{b}_{2},
		\]
		where $c_{1},c_{2}$ are complex numbers and
		\[
		\mathbf{b}_{1}=\left(  1,1,1\right)  ^{T},\text{ }\mathbf{b}_{2}=\left(
		\tilde{z}_{1},\tilde{z}_{2},\tilde{z}_{3}\right)  ^{T}.
		\]
		
	\end{lemma}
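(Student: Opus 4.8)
The plan is to compute the Jacobian matrix $M = [F_{j,k}]$ explicitly at the symmetric configuration $(\tilde{z}_1, \tilde{z}_2, \tilde{z}_3) = (-1, \tfrac{1+\sqrt{3}i}{2}, \tfrac{1-\sqrt{3}i}{2})$ and then read off its kernel. First I would record the partial derivatives of $F_j = \sum_{k\neq j}(z_j - z_k)^{-3}$. For the diagonal entries,
\[
F_{j,j} = \partial_{z_j} F_j = -3\sum_{k\neq j}\frac{1}{(z_j - z_k)^4},
\]
and for the off-diagonal entries ($k\neq j$),
\[
F_{j,k} = \partial_{z_k} F_j = \frac{3}{(z_j - z_k)^4}.
\]
The key structural observation is that each row sums to zero: $F_{j,j} + \sum_{k\neq j}F_{j,k} = -3\sum_{k\neq j}(z_j-z_k)^{-4} + 3\sum_{k\neq j}(z_j-z_k)^{-4} = 0$. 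This immediately gives $M\mathbf{b}_1 = 0$, since multiplying $M$ by the all-ones vector produces exactly the row sums. So $\mathbf{b}_1 = (1,1,1)^T$ is a kernel vector with no computation of the specific differences required.

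**The second kernel vector.** For $\mathbf{b}_2 = (\tilde{z}_1, \tilde{z}_2, \tilde{z}_3)^T$, the $j$-th entry of $M\mathbf{b}_2$ is
\[
\sum_{k} F_{j,k}\,\tilde{z}_k = F_{j,j}\tilde{z}_j + \sum_{k\neq j}\frac{3\,\tilde{z}_k}{(\tilde{z}_j - \tilde{z}_k)^4} = -3\sum_{k\neq j}\frac{\tilde{z}_j}{(\tilde{z}_j - \tilde{z}_k)^4} + 3\sum_{k\neq j}\frac{\tilde{z}_k}{(\tilde{z}_j - \tilde{z}_k)^4},
\]
which telescopes to $-3\sum_{k\neq j}(\tilde{z}_j - \tilde{z}_k)^{-3}$. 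But this is exactly $-3 F_j$ evaluated at the base configuration, and since $\mathcal{F}(\tilde{z}_1,\tilde{z}_2,\tilde{z}_3) = 0$ by hypothesis, every entry vanishes. Hence $M\mathbf{b}_2 = 0$ as well. I find it pleasant that both kernel vectors follow from the algebraic identity $\tilde{z}_k = \tilde{z}_j - (\tilde{z}_j - \tilde{z}_k)$ together with the vanishing of $\mathcal{F}$ at the base point, rather than from grinding out the individual fourth powers of the differences.

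**Establishing that the kernel is exactly two-dimensional.** The remaining task — and the part requiring genuine computation — is to show $\dim\ker M = 2$, i.e. that $M$ has rank exactly $1$. Here I would compute the actual differences: $\tilde{z}_1 - \tilde{z}_2 = -\tfrac{3}{2} - \tfrac{\sqrt{3}}{2}i$, $\tilde{z}_1 - \tilde{z}_3 = -\tfrac{3}{2} + \tfrac{\sqrt{3}}{2}i$, and $\tilde{z}_2 - \tilde{z}_3 = \sqrt{3}i$, all having modulus $\sqrt{3}$, reflecting the equilateral-triangle geometry. I would then compute the fourth powers of these (each of modulus $9$) and assemble the $3\times 3$ matrix $M$ numerically. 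The expected outcome is that $M$ is a nonzero matrix whose image is one-dimensional, so that its nullity is $2$ by rank-nullity. The main obstacle is verifying non-degeneracy of this rank: one must confirm that not all $F_{j,k}$ vanish simultaneously and that the three rows span only a one-dimensional space, which amounts to checking that no further linear relation beyond $\mathbf{b}_1, \mathbf{b}_2$ holds. A clean way to finish is to exhibit a single $2\times 2$ minor of $M$ that is nonzero, which forces $\mathrm{rank}\,M \geq 1$, combined with the two independent kernel vectors $\mathbf{b}_1, \mathbf{b}_2$ (independent because $\tilde{z}_1, \tilde{z}_2, \tilde{z}_3$ are distinct) forcing $\mathrm{rank}\,M \leq 1$; together these pin the kernel down to precisely $\{c_1\mathbf{b}_1 + c_2\mathbf{b}_2\}$.
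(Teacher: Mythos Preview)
Your approach is correct in spirit and matches the paper, which simply records that the lemma ``follows from direct computation of eigenvectors'' without further detail. Your structural observations---that row sums vanish (giving $M\mathbf{b}_1=0$ for free) and that the $j$-th entry of $M\mathbf{b}_2$ telescopes to $-3F_j(\tilde z)=0$---are cleaner than a brute-force evaluation and would make a nice replacement for the paper's one-line proof.

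There is, however, one slip in your final paragraph. You write that exhibiting ``a single $2\times 2$ minor of $M$ that is nonzero \ldots\ forces $\mathrm{rank}\,M \geq 1$.'' A nonzero $2\times 2$ minor would force $\mathrm{rank}\,M\geq 2$, not $\geq 1$, and in fact every $2\times 2$ minor of $M$ \emph{does} vanish here because the rank is exactly $1$. What you actually need is merely a single nonzero \emph{entry} of $M$ (equivalently, $M\neq 0$), which gives $\mathrm{rank}\,M\geq 1$; combined with $\mathrm{rank}\,M\leq 1$ from the two independent kernel vectors, this pins down $\dim\ker M=2$. Concretely, with $\omega=e^{2\pi i/3}$ one finds
\[
M=\frac{1}{3}\begin{pmatrix}1&\omega^2&\omega\\ \omega^2&\omega&1\\ \omega&1&\omega^2\end{pmatrix},
\]
where the second and third rows are $\omega^2$ and $\omega$ times the first, confirming rank one directly. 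With that correction your argument is complete.
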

	
	We remark that the vector $\mathbf{b}_{1}$ reflects the translation invariance
	of the system, and $\mathbf{b}_{2}$ is corresponding to scaling and
	rotation(multiplication by a complex number $c$).
	
	\medskip
	To proceed, we use $d_{k}\left(
	x,y\right)  $ to denote the distance between $\left(  x,y\right)  $ and
	$\left(  x_{k}^{\ast},y_{k}^{\ast}\right)  .$ Let
	\[
	\theta_{\alpha}\left(  x,y\right)  =\left(  \sum\limits_{k=1}^{3}\left(
	1+d_{k}\right)  ^{-1}\right)  ^{-\alpha}.
	\]
	We need some apriori estimates for the linearized operator.
	
	\begin{lemma}
		\label{apries}Let $\varepsilon>0$ be a fixed small constant. Suppose $\eta$
		satisfies $\left\Vert \eta\theta_{\varepsilon}\right\Vert _{L^{\infty}\left(
			\mathbb{R}^{2}\right)  }<+\infty$ and
		\begin{equation}
			\mathcal{L}_{U^{\ast}}\eta=f, \label{etaf}%
		\end{equation}
		where $\left\Vert f\theta_{2}\right\Vert _{L^{\infty}\left(  \mathbb{R}%
			^{2}\right)  }<+\infty.$ Assume for $k=1,2,3,$
	\end{lemma}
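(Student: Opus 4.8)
The plan is to establish the weighted a priori bound --- namely that $\|\eta\theta_\varepsilon\|_{L^{\infty}(\mathbb{R}^2)}\le C\|f\theta_2\|_{L^{\infty}(\mathbb{R}^2)}$ for a constant $C$ independent of $B$, once $B$ is large --- by a blow-up/contradiction scheme built on the nondegeneracy of the single lump $U$ proved in \cite{Liu}. Suppose the bound fails. Then there are $B=B_n\to+\infty$ and functions $\eta_n,f_n$ with $\mathcal{L}_{U^{\ast}}\eta_n=f_n$ satisfying the stated orthogonality conditions, normalized so that $\|\eta_n\theta_\varepsilon\|_{L^{\infty}}=1$ while $\|f_n\theta_2\|_{L^{\infty}}\to0$. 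Since $\theta_\varepsilon$ is comparable to a positive constant on fixed neighborhoods of the peaks $P_k$ and grows like $d_k^{\varepsilon}$ at infinity, the normalization yields a uniform $L^{\infty}$ bound for $\eta_n$ on every fixed neighborhood of each peak, together with the decay $|\eta_n|\lesssim\theta_\varepsilon^{-1}$ in the far field. The argument then splits into an inner analysis near each $P_k$ and an outer analysis away from all peaks.

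For the inner analysis I fix $k$, recenter at $P_k$, and observe that on any fixed ball the coefficient $6U^{\ast}$ of $\mathcal{L}_{U^{\ast}}$ converges to $6U$, because $U_j^{\ast}\to0$ locally for $j\neq k$ while $U_k^{\ast}\to U$. Converting to the equivalent local fourth-order equation $\partial_x^2(\partial_x^2\eta_n-\eta_n+6U^{\ast}\eta_n)-\partial_y^2\eta_n=\partial_x^2 f_n$ and applying interior Schauder estimates, I extract a subsequence converging in $C^2_{loc}$ to a bounded $\eta_\infty$ solving $\mathcal{L}_U\eta_\infty=0$. The nondegeneracy of $U$ from \cite{Liu} gives $\eta_\infty=c_1\partial_xU+c_2\partial_yU$. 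Using the decay of $\partial_xU,\partial_yU$ together with the weighted bound on $\eta_n$ to justify passing to the limit in the orthogonality integrals $\int\eta_n\partial_xU_k^{\ast}=\int\eta_n\partial_yU_k^{\ast}=0$, one finds $\int\eta_\infty\partial_xU=\int\eta_\infty\partial_yU=0$, hence $c_1=c_2=0$ and $\eta_\infty\equiv0$. Thus $\eta_n\to0$ uniformly on each fixed neighborhood of every peak.

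For the outer analysis I use that, away from the peaks, $U^{\ast}\to0$, so $\mathcal{L}_{U^{\ast}}$ is a small perturbation of the constant-coefficient operator $\mathcal{L}_0\eta=\partial_x^2\eta-\eta-\partial_x^{-2}\partial_y^2\eta$, whose Fourier symbol is bounded away from zero; hence $\mathcal{L}_0$ is invertible with good mapping properties between the weighted spaces attached to $\theta_\varepsilon$ and $\theta_2$. The decisive structural point is the gap $2-\varepsilon>0$: because $\theta_\varepsilon$ decays far more slowly than $\theta_2$, a suitable multiple of $\theta_\varepsilon^{-1}$ serves, up to a lower-order correction, as a supersolution for the outer problem with data $f_n$ and with the inner-boundary values of $\eta_n$ already controlled from the inner analysis. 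A comparison argument then bounds $\|\eta_n\theta_\varepsilon\|_{L^{\infty}}$ in the outer region by $o(1)+C\|f_n\theta_2\|_{L^{\infty}}+C\max_k\sup_{\partial B_k}|\eta_n|\theta_\varepsilon$, all of which tend to $0$. Combined with the inner estimate, this forces $\|\eta_n\theta_\varepsilon\|_{L^{\infty}}\to0$, contradicting the normalization and proving the lemma.

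The main obstacle I anticipate is the nonlocal term $\partial_x^{-2}\partial_y^2$, which obstructs both the local compactness and the maximum-principle step. In the inner regime I would control it by tracking the antiderivative $\partial_x^{-1}\eta_n$ alongside $\eta_n$, so that the nonlocal part converges together with the local coefficients; the passage to the fourth-order form is precisely to make interior elliptic estimates available. In the outer regime the nonlocality means $\theta_\varepsilon^{-1}$ is not exactly a supersolution, and the delicate point is to show that the term $\partial_x^{-2}\partial_y^2(\theta_\varepsilon^{-1})$, as well as the contribution of the inner-boundary pieces to the nonlocal operator, is of strictly lower order than the zeroth-order term $-\theta_\varepsilon^{-1}$. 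This lower-order gain is exactly what the weight separation $2-\varepsilon>0$ is designed to provide, and verifying it quantitatively is where the computation will require the most care.
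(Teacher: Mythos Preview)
Your overall strategy---contradiction plus the nondegeneracy of the single lump from \cite{Liu}---matches the paper's, and your inner analysis is essentially what the paper does in its final step. The divergence, and the gap, is in the outer analysis.

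You propose to control $\eta_n$ away from the peaks by treating $\theta_\varepsilon^{-1}$ as a supersolution and appealing to a comparison argument. But $\mathcal{L}_0=\partial_x^2-1-\partial_x^{-2}\partial_y^2$ is not a second-order elliptic operator: it is nonlocal in $x$ (equivalently, fourth order after multiplying through by $\partial_x^2$), and it carries no maximum principle. You flag this as the delicate point, but the obstruction is structural rather than computational---there is no barrier argument to salvage, and the weight gap $2-\varepsilon$ does not by itself manufacture one. Verifying a supersolution inequality would require controlling $\partial_x^{-2}\partial_y^2(\theta_\varepsilon^{-1})$, a quantity that depends on the global profile of $\theta_\varepsilon^{-1}$ along each horizontal line, not just its local behavior.

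The paper avoids this entirely. It rewrites the equation as $\mathcal{L}_0\eta=f-6U^\ast\eta$, uses a partition of unity $\rho_k$ adapted to cones around the $P_k$, and splits $\eta=\eta_1+\eta_2+\eta_3$ with $\mathcal{L}_0\eta_k=\rho_k f-6U_k^\ast\eta$. The weighted bound then follows directly from convolution against the Green's function $K$ of $\mathcal{L}_0$, using the decay $\|r^2K\|_{L^\infty}\le C$ established by de Bouard--Saut \cite{Bouard97}. This gives
\[
\|(1+d_k)^\sigma\eta_k\|_{L^\infty}\le C\|(1+d_k)^2\rho_k f\|_{L^\infty}+C\|\eta\|_{L^\infty(B_{r_0}(P_k))}
\]
for any $\sigma<2$, and the last term is then killed by exactly your inner contradiction step. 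The Green's function decay is the missing ingredient; once you invoke it, the supersolution machinery becomes unnecessary and the proof closes. Note also that the paper's conclusion is $\|\eta\theta_\sigma\|_{L^\infty}\le C\|f\theta_2\|_{L^\infty}$ for every $\sigma\in(0,2-\varepsilon)$, which is stronger than the single case $\sigma=\varepsilon$ you state.
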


	\[
	\int_{\mathbb{R}^{2}}\eta\partial_{x}U_{k}^{\ast}dxdy=0,\text{ \ and }%
	\int_{\mathbb{R}^{2}}\eta\partial_{y}U_{k}^{\ast}dxdy=0.
	\]
	Then for any $\sigma\in\left(  0,2-\varepsilon\right)  ,$ there holds
	\[
	\left\Vert \eta\theta_{\sigma}\right\Vert _{L^{\infty}\left(  \mathbb{R}%
		^{2}\right)  }\leq C\left\Vert f\theta_{2}\right\Vert _{L^{\infty}\left(
		\mathbb{R}^{2}\right)  },
	\]
	where $C$ is independent of $\sigma$ and $f.$
	
	\begin{proof}
		Consider the cone $\mathcal{C}_{1},\mathcal{C}_{2},\mathcal{C}_{3}$ with
		vertex at the origin containing those points whose angle coordinate are in the
		range $\left(  \frac{2\pi}{3},\frac{4\pi}{3}\right)  ,\left(  \frac{2\pi}%
		{3},2\pi\right)  ,\left(  0,\frac{2\pi}{3}\right)  ,$ respectively. Then
		$P_{k}\in\mathcal{C}_{k}.$ Let $\rho_{1},\rho_{2},\rho_{3}$ be a partition of
		unity such that $\rho_{k}$ equals $1$ in most part of $\mathcal{C}_{k}$ and
		$\nabla\rho_{k}$ is supported in a radius $1$ tubular neighborhood of
		$\partial\mathcal{C}_{k}.$
		
		We rewrite the equation $\left(  \ref{etaf}\right)  $ into the form
		\[
		\mathcal{L}_{0}\eta:=\partial_{x}^{2}\eta-\eta-\partial_{x}^{-2}\partial
		_{y}^{2}\eta=f-6U^{\ast}\eta.
		\]
		Hence $\eta=\eta_{1}+\eta_{2}+\eta_{3},$ where $f_{k}=\rho_{k}f$ and $\eta
		_{k}$ is determined by the equation
		\[
		\mathcal{L}_{0}\eta_{k}=f_{k}-6U_{k}^{\ast}\eta.
		\]
		Observe that $\mathcal{L}_{0}$ is a hypo-elliptic operator with constant
		coefficient, and the decay properties of its Green function $K$ have been
		established in \cite{Bouard97}. In particular,
		\[
		\left\Vert r^{2}K\right\Vert _{L^{\infty}\left(  \mathbb{R}^{2}\right)  }\leq
		C.
		\]
		Using this decay estimate, we obtain
		\[
		\left\Vert \left(  1+d_{k}\right)  ^{\sigma}\eta_{k}\right\Vert _{L^{\infty
			}\left(  \mathbb{R}^{2}\right)  }\leq C\left\Vert \left(  1+d_{k}\right)
		^{2}f_{k}\right\Vert _{L^{\infty}\left(  \mathbb{R}^{2}\right)  }+C\left\Vert
		\left(  1+d_{k}\right)  ^{\sigma}U_{k}^{\ast}\eta\right\Vert _{L^{\infty
			}\left(  \mathbb{R}^{2}\right)  }.
		\]
		It follows that for some $r_{0}$ sufficiently large,
		\[
		\left\Vert \left(  1+d_{k}\right)  ^{\sigma}\eta_{k}\right\Vert _{L^{\infty
			}\left(  \mathbb{R}^{2}\right)  }\leq C\left\Vert \left(  1+d_{k}\right)
		^{2}f_{k}\right\Vert _{L^{\infty}\left(  \mathbb{R}^{2}\right)  }+C\left\Vert
		\eta\right\Vert _{L^{\infty}\left(  B_{r_{0}}\left(  P_{k}\right)  \right)
		}.
		\]
		Now we claim%
		\[
		\left\Vert \eta\right\Vert _{L^{\infty}\left(  B_{r_{0}}\left(  P_{k}\right)
			\right)  }\leq C\left\Vert \left(  1+d_{k}\right)  ^{2}f_{k}\right\Vert
		_{L^{\infty}\left(  \mathbb{R}^{2}\right)  }.
		\]
		Otherwise, there would exist a sequence of $\eta\left(  x-x_{k}^{\ast}%
		,y-y_{k}^{\ast}\right)  $ which converges to solution of the equation
		\[
		\mathcal{L}_{U}\eta_{\infty}=0.
		\]
		However, this contradicts with the nondegeneracy of lump solution \cite{Liu}
		and the assumption that $\eta$ is orthogonal to the $\partial_{x}U_{k}^{\ast
		},\partial_{y}U_{k}^{\ast}.$
	\end{proof}

	The next lemma deals with the explicit expression of the function related to the main order correction of the approximate solution.
	\begin{lemma}
		\label{aprio}Let $U$ be the classical lump solution defined by $\left(
		\ref{lumpc}\right)  $ and
		\[
		\omega=\partial_{x}\left[  \frac{24x\left(  y^{2}-3\right)  }{\left(
			x^{2}+y^{2}+3\right)  ^{2}}\right]  .\
		\]
		Then
		\begin{equation}
			\mathcal{L}_{U}\omega=-6U,\text{ and }\mathcal{L}_{U}\left[  \partial
			_{x}\omega\right]  =-6\partial_{x}U-6\partial_{x}U\omega. \label{Lw}%
		\end{equation}
		
	\end{lemma}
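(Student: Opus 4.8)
The plan is to obtain both identities from a single \emph{exact} one-parameter family of Boussinesq solutions, differentiate it to land in the kernel of the unintegrated linearized operator, and then integrate twice in $x$. Write $\mathcal{N}[u]=\partial_x^2(\partial_x^2u-u+3u^2)-\partial_y^2u$ for the Boussinesq operator and $\mathcal{N}'[U]\eta=\partial_x^2(\partial_x^2\eta-\eta+6U\eta)-\partial_y^2\eta$ for its linearization at $U$. The crucial bookkeeping relation is $\partial_x^2\mathcal{L}_U\eta=\mathcal{N}'[U]\eta$, so it suffices to establish the identities at the level of $\mathcal{N}'[U]$ (where no integration appears) and then control the affine-in-$x$ integration constants.

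First I would exhibit the family. Setting $s=1-6\epsilon$, a direct rescaling check shows that
\[
u_\epsilon(x,y)=\epsilon+s\,U\!\left(\sqrt{s}\,x,\,s\,y\right)
\]
solves $\mathcal{N}[u_\epsilon]=0$ for all small $\epsilon$: substituting $u_\epsilon=\epsilon+v$ turns $\mathcal{N}$ into the Boussinesq operator with the linear coefficient replaced by $s$ (the additive constant $\epsilon$ only shifts that coefficient, since $\partial_x^2$ kills constants), and $v=sU(\sqrt{s}x,sy)$ is precisely the lump of that rescaled equation. Differentiating $\mathcal{N}[u_\epsilon]=0$ at $\epsilon=0$ gives $\mathcal{N}'[U]\psi=0$, where
\[
\psi:=\partial_\epsilon u_\epsilon\big|_{\epsilon=0}=1-6U-3x\,\partial_xU-6y\,\partial_yU.
\]

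The key algebraic step is to recognize that $\omega=\psi-1$, i.e. $\omega=-6U-3x\partial_xU-6y\partial_yU$. This is a finite computation: expanding $\omega=\partial_x\!\left[24x(y^2-3)/L^2\right]=24(y^2-3)(y^2-3x^2+3)/L^3$ and comparing with $-6U-3x\partial_xU-6y\partial_yU$, the combination $\omega+3x\partial_xU+6y\partial_yU$ collapses, after clearing the common factor $L^3$, to $24(x^2-y^2-3)L$, and $24(x^2-y^2-3)/L^2=-6U$. Since $\mathcal{N}'[U][1]=\partial_x^2(6U-1)=6\partial_x^2U$, subtracting yields $\mathcal{N}'[U]\omega=-6\partial_x^2U$, that is $\partial_x^2(\mathcal{L}_U\omega+6U)=0$. (One could instead substitute the explicit $\omega$ and $U$ into $\mathcal{L}_U$ directly, but the family avoids computing $\partial_x^{-2}\partial_y^2\omega$ by hand.)

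It remains to discard the integration constant and pass to the second identity. Here one uses that $\omega=\partial_xW$ with $W=24x(y^2-3)/L^2$ odd in $x$ and decaying, so $\int_{\mathbb{R}}W\,dx=0$ and hence $\partial_x^{-2}\partial_y^2\omega=\int_{-\infty}^x\partial_y^2W\,dx'$ decays as $x\to\pm\infty$; therefore $\mathcal{L}_U\omega$ decays, the affine-in-$x$ solution of $\partial_x^2(\cdot)=0$ must vanish, and $\mathcal{L}_U\omega=-6U$. Finally, applying $\partial_x$ to this and using that $\partial_x$ commutes with $\partial_x^{-2}\partial_y^2$ on the decaying $\omega$ together with $\partial_x(6U\omega)=6U\,\partial_x\omega+6\partial_xU\,\omega$ gives exactly $\mathcal{L}_U[\partial_x\omega]=-6\partial_xU-6\partial_xU\,\omega$. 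The main obstacle is the bookkeeping of these integration constants, precisely the integrability subtlety flagged after the definition of $\mathcal{L}_q$, which is why the odd symmetry and decay of $W$ are essential; everything else is the rescaling verification and one explicit polynomial identity.
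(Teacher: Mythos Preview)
Your argument is correct. The paper's own proof is simply ``this follows from direct computation,'' i.e.\ one plugs the explicit rational functions $\omega$ and $U$ into $\mathcal{L}_U$ and checks both identities by hand; no structure is invoked.

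Your route is genuinely different and more conceptual: you identify $\omega$ as (up to an additive constant) the infinitesimal generator of the exact scaling family $u_\epsilon=\epsilon+(1-6\epsilon)\,U(\sqrt{1-6\epsilon}\,x,(1-6\epsilon)y)$, so that $\mathcal{N}'[U](\omega+1)=0$ comes for free and the first identity reduces to the trivial computation $\mathcal{N}'[U][1]=6\partial_x^2U$ plus a decay argument to kill the affine-in-$x$ integration constants. The second identity is then obtained by one $\partial_x$ and the commutator relation $\partial_x\circ\mathcal{L}_U=\mathcal{L}_U\circ\partial_x+6(\partial_xU)\cdot$. What this buys you over the paper's direct verification is an explanation of \emph{why} such an $\omega$ exists (it is the scaling/Galilean mode of the lump), and it cleanly isolates the only subtle point---the integrability of $\partial_x^{-2}\partial_y^2\omega$---which you handle via the odd symmetry and decay of $W$. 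The paper's approach, by contrast, is shorter to state but offers no insight and implicitly assumes the reader will verify the $\partial_x^{-2}$ term is well defined; its only remark in that direction is the observation that $\omega=O(r^{-2})$.
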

	
	\begin{proof}
		This follows from direct computation. Note that $\omega$ decays at the rate
		$O\left(  r^{-2}\right)  $ at infinity.
	\end{proof}
	
	With Lemma \ref{aprio} being understood, we introduce the notation
	\[
	\mathbf{p}_{k}=-2\sum\limits_{j\neq k}\operatorname{Re}\frac{1}{\left(
		z_{k}^{\ast}-z_{j}^{\ast}\right)  ^{2}},\text{ \ \ for \ \ }k=1,2,3.
	\]
	From the explicit formula of $U$ and $\left(  \ref{ident}\right)  ,$ we
	deduce
	\[
	U_{2}^{\ast}\left(  x_{1}^{\ast},y_{1}^{\ast}\right)  +U_{3}^{\ast}\left(
	x_{1}^{\ast},y_{1}^{\ast}\right)  =\mathbf{p}_{1}+O\left(  \gamma^{-4}\right)
	,
	\]
	and $\left\vert \mathbf{p}_{k}\right\vert =O\left(  \gamma^{-2}\right)  .$  We then define, for\ $k=1,2,3,$
	\[
	\omega_{k}\left(  x,y\right)  :=\mathbf{p}_{k}\omega\left(  x-x_{k}^{\ast
	},y-y_{k}^{\ast}\right)  .
	\]

	With the help of  $\omega_{k}$, we will prove the following
	result, which gives us a more precise description of the solution $u.$
	
	\begin{proposition}
		There exists $\varepsilon>0$ such that the function $\xi=u-U^{\ast}$ has the
		following expansion:%
		\[
		\left\Vert \left(  \xi-\sum\limits_{k=1}^{3}\omega_{k}\right)  \theta
		_{\varepsilon}\right\Vert _{L^{\infty}\left(  \mathbb{R}^{2}\right)  }\leq
		C\gamma^{-3}.
		\]
		
	\end{proposition}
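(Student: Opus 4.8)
The plan is to regard $\sum_{k}\omega_{k}$ as a second order correction of the approximate solution $U^{\ast}$ and to control the remainder $\psi:=\xi-\sum_{k=1}^{3}\omega_{k}$ through the a priori estimate of Lemma \ref{apries}. First I would write down the equation for $\psi$. Using the splitting $\mathcal{L}_{U^{\ast}}=\mathcal{L}_{U_{k}^{\ast}}+6\sum_{j\neq k}U_{j}^{\ast}$ together with the identity $\left(\ref{Lw}\right)$ of Lemma \ref{aprio}, which after translation and scaling gives $\mathcal{L}_{U_{k}^{\ast}}\omega_{k}=-6\mathbf{p}_{k}U_{k}^{\ast}$, one finds
\[
\mathcal{L}_{U^{\ast}}\Big(\sum_{k}\omega_{k}\Big)=-6\sum_{k}\mathbf{p}_{k}U_{k}^{\ast}+6\sum_{k}\sum_{j\neq k}U_{j}^{\ast}\omega_{k}.
\]
Subtracting this from the exact equation $\left(\ref{pert}\right)$ satisfied by $\xi$, the remainder obeys $\mathcal{L}_{U^{\ast}}\psi=g$, where
\[
g=-E+6\sum_{k}\mathbf{p}_{k}U_{k}^{\ast}-3\xi^{2}-6\sum_{k}\sum_{j\neq k}U_{j}^{\ast}\omega_{k}.
\]

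The core of the argument is then to show $\left\Vert g\theta_{2}\right\Vert _{L^{\infty}}\leq C\gamma^{-3}$. The last two terms are routine: Lemma \ref{er} yields $\left\Vert \xi\right\Vert _{L^{\infty}}=O(\gamma^{-2})$, so $\xi^{2}=O(\gamma^{-4})$, and each $U_{j}^{\ast}\omega_{k}$ is a product of two factors of size $O(\gamma^{-2})$, hence also $O(\gamma^{-4})$ in the weighted norm. The decisive term is the interaction mismatch $-E+6\sum_{k}\mathbf{p}_{k}U_{k}^{\ast}$. Localizing near $P_{j}$ and using $\left(\ref{erofa}\right)$, one has $E\approx6U_{j}^{\ast}\sum_{k\neq j}U_{k}^{\ast}$, and I would Taylor expand $\sum_{k\neq j}U_{k}^{\ast}$ about $P_{j}$: its value there is $\mathbf{p}_{j}+O(\gamma^{-4})$, which is exactly cancelled by $6\mathbf{p}_{j}U_{j}^{\ast}$, while its gradient is proportional to $\sum_{k\neq j}\left(z_{j}^{\ast}-z_{k}^{\ast}\right)^{-3}$ and vanishes by the balancing relation $\left(\ref{balan}\right)$; this is precisely the cancellation encoded in Lemma \ref{proj}. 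Consequently the mismatch begins at the quadratic order of the expansion and is of size $O(\gamma^{-4})$ near each lump.

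Before invoking Lemma \ref{apries} I must restore its orthogonality hypotheses. Since $\omega$ is even in both variables, each $\omega_{k}$ is orthogonal to $\partial_{x}U_{k}^{\ast}$ and $\partial_{y}U_{k}^{\ast}$, while the cross pairings with the kernels centered at $P_{j}$ for $j\neq k$ are of much lower order; subtracting a small combination $\sum_{k}\left(a_{k}\partial_{x}U_{k}^{\ast}+b_{k}\partial_{y}U_{k}^{\ast}\right)$ makes $\psi$ exactly orthogonal, and the induced change in $g$ is negligible because $\mathcal{L}_{U^{\ast}}\partial_{x}U_{k}^{\ast}=6\sum_{j\neq k}U_{j}^{\ast}\partial_{x}U_{k}^{\ast}=O(\gamma^{-2})$. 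Lemma \ref{apries} applied with $\sigma=\varepsilon$ would then produce $\left\Vert \psi\theta_{\varepsilon}\right\Vert _{L^{\infty}}\leq C\left\Vert g\theta_{2}\right\Vert _{L^{\infty}}$.

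The main obstacle lies in the transition region, where all three distances $d_{k}$ are comparable to $\gamma$ and $\theta_{2}\sim\gamma^{2}$. There the quadratic mismatch, although only $O(\gamma^{-4})$ pointwise, has $\theta_{2}$-weighted norm of size $O(\gamma^{-2})$, so the crude form of Lemma \ref{apries} alone does not reach $\gamma^{-3}$. This is exactly the degeneracy of the reduced problem anticipated in the introduction and reflected in the two dimensional kernel of Lemma \ref{vector}. To close the gap I would not feed all of $g$ into Lemma \ref{apries} at once; instead I would isolate the slowly decaying interaction part, solve it against the Green's function of $\mathcal{L}_{0}$, whose sharp decay is recorded in \cite{Bouard97}, to see that the corresponding piece of $\psi$ is in fact $O(\gamma^{-4})$ in this region, and reserve the a priori estimate for the genuinely $O(\gamma^{-3})$ remainder supported near the lumps. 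The balancing identities $\left(\ref{balan}\right)$ and the structure of the kernel in Lemma \ref{vector} are what make this splitting consistent, and together they yield the stated $\gamma^{-3}$ bound.
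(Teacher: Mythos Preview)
Your overall strategy coincides with the paper's: set $\eta:=\xi-\sum_k\omega_k$ (your $\psi$), write down $\mathcal{L}_{U^\ast}\eta$, bound the right-hand side in the $\theta_2$-weighted norm, and invoke Lemma~\ref{apries}. The paper's proof is in fact nothing more than those three lines; it simply asserts
\[
\Bigl\|\Bigl(-\sum_k\mathcal{L}_{U^\ast}\omega_k-E(U^\ast)\Bigr)\theta_2\Bigr\|_{L^\infty}\le C\gamma^{-3}
\]
without any further justification, does not separately discuss the nonlinear term $3\xi^{2}$, and does not mention the orthogonality adjustment (which, as you note, is harmless since $\omega$ is even in both variables).

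Where you diverge from the paper is in the extended discussion of the right-hand side and the ``obstacle'' in the transition region. Two remarks. First, a small inaccuracy: the points $z_j^\ast$ are chosen so that $\xi$ is orthogonal to the translated kernels, \emph{not} so that $(\ref{balan})$ holds exactly; hence the gradient of $\sum_{k\ne j}U_k^\ast$ at $z_j^\ast$ is only $O(\gamma^{-4})$, not zero. This does not change your conclusion near each lump but the reasoning should be stated that way. Second, and more substantively, your transition-region concern---that the residual $O(\gamma^{-4})$ far-field interaction has $\theta_2$-weighted size only $O(\gamma^{-2})$---is a point the paper does not address at all. The paper neither performs the Green-function splitting you outline nor invokes Lemma~\ref{vector} here (that lemma enters only later, in the proof of Proposition~\ref{Morse}). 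So either the paper is relying on an unstated far-field cancellation, or its three-line argument is glossing over precisely the difficulty you isolate. Your proposed remedy is plausible, but it is genuinely additional work beyond what the paper records, and you should be aware that you are filling in more than the authors did rather than reproducing their argument.
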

	
	\begin{proof}
		We write $\xi=\sum\limits_{k=1}^{3}\omega_{k}+\eta,$ then $\eta$ satisfies
		\[
		\mathcal{L}_{U^{\ast}}\eta=-\sum\limits_{k=1}^{3}\mathcal{L}_{U^{\ast}}%
		\omega_{k}-E\left(  U^{\ast}\right)  -3\left(  \sum\limits_{k=1}^{3}\omega
		_{k}+\eta\right)  ^{2}.
		\]
		We have
		\[
		\left\Vert \left(  -\sum\limits_{k=1}^{3}\mathcal{L}_{U^{\ast}}\omega
		_{k}-E\left(  U^{\ast}\right)  \right)  \theta_{2}\right\Vert _{L^{\infty
			}\left(  \mathbb{R}^{2}\right)  }\leq C\gamma^{-3}.
		\]
		Applying Lemma \ref{apries}, we conclude that for some $\varepsilon>0,$
		\[
		\left\Vert \eta\theta_{\varepsilon}\right\Vert _{L^{\infty}\left(
			\mathbb{R}^{2}\right)  }\leq C\gamma^{-3}.
		\]
		This finishes the proof.
	\end{proof}
	
	In the next result, we need to use the following constants:
	\begin{align*}
		a^{\ast}  &  =\int_{\mathbb{R}^{2}}\left(  3\omega^{2}+6\omega\right)
		\partial_{x}^{2}Udxdy,\\
		b^{\ast}  &  =\int_{\mathbb{R}^{2}}\left(  3\omega^{2}+6\omega\right)
		\partial_{x}\partial_{y}Udxdy,\\
		c^{\ast} & = \int_{\mathbb{R}^{2}}\left(  3\omega^{2}+6\omega\right)
		\partial_{y}^2Udxdy.
	\end{align*}

	\begin{lemma}
		\label{pro1}
		For any index $j\in\{1,2,3\}$ it holds that
		\begin{equation}
			\label{lemma13-1}
			\begin{aligned}
				&\int_{\mathbb{R}^{2}}\partial_{x}U_{j}^{\ast}\mathcal{L}_{u}\left[
				\partial_{x}U_{j}^{\ast}\right]  dxdy=-3d^{\ast}\sum_{k\neq j}\operatorname{Re}\left[
				\left( z_{j}^{\ast}%
				-z_{k}^{\ast}\right)  ^{-4}\right]  +a^{\ast}\mathbf{p}_{j}^{2}+O\left(
				\gamma^{-5}\right)  ,	\\
				&\int_{\mathbb{R}^{2}}\partial_{x}U_{j}^{\ast}\mathcal{L}_{u}\left[
				\partial_{y}U_{j}^{\ast}\right]  dxdy=3d^{\ast}\sum_{k\neq j}\operatorname{Im}\left[
				\left(  z_{j}^{\ast}%
				-z_{k}^{\ast}\right)  ^{-4}\right]  +b^{\ast}\mathbf{p}_{j}^{2}+O\left(
				\gamma^{-5}\right)  ,\\
				&\int_{\mathbb{R}^{2}}\partial_{y}U_{j}^{\ast}\mathcal{L}_{u}\left[
				\partial_{y}U_{j}^{\ast}\right]  dxdy=3d^{\ast}\sum_{k\neq j}\operatorname{Re}\left[
				\left(  z_{j}^{\ast}%
				-z_{k}^{\ast}\right)  ^{-4}\right]  +c^{\ast}\mathbf{p}_{j}^{2}+O\left(
				\gamma^{-5}\right).
			\end{aligned}
		\end{equation}
		While for different indices $j,k\in\{1,2,3\}$, we have 
		\begin{equation}
			\label{lemma13-2}
			\begin{aligned}
				\int_{\mathbb{R}^{2}}\partial_{x}U_{j}^{\ast}\mathcal{L}_{u}\left[
				\partial_{x}U_{k}^{\ast}\right]  dxdy=3d^{\ast}\operatorname{Re}\left[
				\left(  z_{j}^{\ast}-z_{k}^{\ast}\right)  ^{-4}\right]  +O\left(  \gamma
				^{-5}\right),\\
				\int_{\mathbb{R}^{2}}\partial_{x}U_{j}^{\ast}\mathcal{L}_{u}\left[
				\partial_{y}U_{k}^{\ast}\right]  dxdy=-3d^{\ast}\operatorname{Im}\left[
				\left(  z_{j}^{\ast}-z_{k}^{\ast}\right)  ^{-4}\right]  +O\left(  \gamma
				^{-5}\right),\\
				\int_{\mathbb{R}^{2}}\partial_{y}U_{j}^{\ast}\mathcal{L}_{u}\left[
				\partial_{y}U_{k}^{\ast}\right]  dxdy=-3d^{\ast}\operatorname{Re}\left[
				\left(  z_{j}^{\ast}-z_{k}^{\ast}\right)  ^{-4}\right]  +O\left(  \gamma
				^{-5}\right).
			\end{aligned}	
		\end{equation}
	\end{lemma}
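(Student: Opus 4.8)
The plan is to compute each of these six inner products by inserting the precise expansion of the solution $u = U^{\ast} + \xi$ into the linearized operator $\mathcal{L}_u$ and then isolating the two sources of contribution: the \emph{interaction} between distinct lumps (producing the $d^{\ast}(z_j^{\ast}-z_k^{\ast})^{-4}$ terms) and the \emph{self-correction} coming from $\omega_k$ (producing the $a^{\ast},b^{\ast},c^{\ast}$ terms weighted by $\mathbf{p}_j^2$). First I would write $\mathcal{L}_u = \mathcal{L}_{U^{\ast}} + 6\xi$ and, using the Proposition, replace $\xi$ by $\sum_k \omega_k + \eta$ with $\|\eta\theta_{\varepsilon}\|_{L^{\infty}} \leq C\gamma^{-3}$. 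Since $\partial_x U_j^{\ast}, \partial_y U_j^{\ast}$ decay like $O(r^{-3})$ and are concentrated near $P_j$, the $\eta$-contribution and all far-field cross terms are controlled at size $O(\gamma^{-5})$, which is absorbed into the stated error.

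For the diagonal identities \eqref{lemma13-1}, I would split $\mathcal{L}_u[\partial_x U_j^{\ast}]$ into the part $\mathcal{L}_{U_j^{\ast}}[\partial_x U_j^{\ast}] = 0$ (translation invariance of the single lump), the interaction part $6(U^{\ast} - U_j^{\ast})\partial_x U_j^{\ast}$, and the self-correction part $6\omega_j \partial_x U_j^{\ast}$. The interaction part, paired against $\partial_x U_j^{\ast}$ and expanded near $P_j$ exactly as in the proof of Lemma \ref{proj} but carrying one extra order, yields the $-3d^{\ast}\sum_{k\neq j}\operatorname{Re}[(z_j^{\ast}-z_k^{\ast})^{-4}]$ term via the second-derivative identities for $(y^2-x^2)/(x^2+y^2)^2$ recorded in \eqref{dy} and the lines following it. The self-correction part, using $\omega_j = \mathbf{p}_j\,\omega(\cdot - P_j^{\ast})$ together with the relation $\mathcal{L}_U[\partial_x\omega] = -6\partial_x U - 6\partial_x U\,\omega$ from Lemma \ref{aprio}, produces after integration by parts the constant $a^{\ast} = \int (3\omega^2 + 6\omega)\partial_x^2 U$ multiplied by $\mathbf{p}_j^2$; the mixed $x$-$y$ and $y$-$y$ pairings give $b^{\ast}$ and $c^{\ast}$ analogously by the same computation with $\partial_y$ replacing one or both $\partial_x$'s. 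The off-diagonal identities \eqref{lemma13-2} are simpler: here there is no self-interaction term of size $\gamma^{-2}$ since $\omega_k$ is centered at a different point, so only the leading interaction survives and one reads off the $\pm 3d^{\ast}\operatorname{Re/Im}[(z_j^{\ast}-z_k^{\ast})^{-4}]$ terms directly, again from the second-order expansion of $U$ near the relevant peak.

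The main obstacle I anticipate is \emph{bookkeeping the correct order} in the diagonal terms: the interaction contributes at order $\gamma^{-4}$ (one order finer than the $\gamma^{-3}$ projection in Lemma \ref{proj}), while the self-correction $\omega_j$ is itself $O(\gamma^{-2})$ so that $\omega_j\partial_x U_j^{\ast}$ paired with $\partial_x U_j^{\ast}$ is $O(\gamma^{-2})$, and only the \emph{cross term} $\mathbf{p}_j^2$ of the correct quadratic order survives after accounting for the $3\omega^2$ in $a^{\ast}$. Keeping these orders straight, and verifying that the $\eta$-remainder genuinely lands at $O(\gamma^{-5})$ rather than contaminating the $\gamma^{-4}$ level, is the delicate part; this requires the weighted estimate of Lemma \ref{apries} with a weight $\theta_{\sigma}$ chosen so that the pairing integrals converge and the orthogonality conditions on $\eta$ eliminate the would-be leading $\eta$-contribution against the approximate kernels $\partial_x U_j^{\ast}, \partial_y U_j^{\ast}$.
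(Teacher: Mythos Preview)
Your plan has a genuine gap in the diagonal identities. You propose to insert the expansion $\xi=\sum_k\omega_k+\eta$ directly into
\[
\int_{\mathbb{R}^2}\partial_xU_j^{\ast}\,\mathcal{L}_u[\partial_xU_j^{\ast}]\,dxdy
=6\int_{\mathbb{R}^2}(\partial_xU_j^{\ast})^2\bigl(U^{\ast}-U_j^{\ast}+\xi\bigr)\,dxdy,
\]
but the remainder term $6\int(\partial_xU_j^{\ast})^2\eta$ is then only $O(\gamma^{-3})$, since the Proposition gives $\|\eta\theta_\varepsilon\|_{L^\infty}\le C\gamma^{-3}$ and $\int(\partial_xU)^2=O(1)$. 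This swamps the $O(\gamma^{-4})$ main terms you are trying to isolate. The orthogonality of $\xi$ (or $\eta$) to $\partial_xU_j^{\ast}$ does \emph{not} help here, because the weight in the integral is $(\partial_xU_j^{\ast})^2$, not $\partial_xU_j^{\ast}$. Relatedly, your mechanism for producing $a^{\ast}\mathbf{p}_j^2$ is incorrect: the term $6\int(\partial_xU_j^{\ast})^2\omega_j$ is \emph{linear} in $\mathbf{p}_j$, hence $O(\gamma^{-2})$; it cancels against the constant part of $U^{\ast}-U_j^{\ast}\approx\mathbf{p}_j$ via the identity $\int(\partial_xU)^2(1+\omega)=0$ (pair $\mathcal{L}_U[\partial_x\omega]=-6\partial_xU(1+\omega)$ with $\partial_xU$), but it does not generate a $\mathbf{p}_j^2$ term.

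The paper avoids this by a different device: it differentiates the equation $\mathcal{L}_{U^{\ast}}\xi=J:=-E(U^{\ast})-3\xi^2$ with respect to $x$, multiplies by $\partial_xU_j^{\ast}$, and integrates using self-adjointness of $\mathcal{L}_{U^{\ast}}$. This converts the dangerous integral into
\[
6\int(\partial_xU_j^{\ast})^2\xi
=\int\partial_xJ\,\partial_xU_j^{\ast}
+6\int\Bigl(\sum_{k\neq j}U_k^{\ast}\Bigr)\partial_x^2U_j^{\ast}\,\xi .
\]
Now the nonlinear piece $-3\partial_x(\xi^2)$ of $\partial_xJ$, together with the cross term on the right (where $\sum_{k\neq j}U_k^{\ast}\approx\mathbf{p}_j$ near $P_j$), produces after substituting $\xi\approx\omega_j=\mathbf{p}_j\omega$ precisely $\mathbf{p}_j^2\int(3\omega^2+6\omega)\partial_x^2U=a^{\ast}\mathbf{p}_j^2$; the $-\partial_xE$ piece combines with $6\int(\partial_xU_j^{\ast})^2\sum_{k\neq j}U_k^{\ast}$ via integration by parts to give $3\int U_j^{\ast2}\partial_x^2\bigl(\sum_{k\neq j}U_k^{\ast}\bigr)$, which is already $O(\gamma^{-4})$ and yields the $-3d^{\ast}\sum\operatorname{Re}(z_j^{\ast}-z_k^{\ast})^{-4}$ term. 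In this route $\eta$ never appears undifferentiated against $(\partial_xU_j^{\ast})^2$, and the error is genuinely $O(\gamma^{-5})$. Your treatment of the off-diagonal identities \eqref{lemma13-2} is fine and matches the paper.
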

	
	\begin{proof}
		We shall firstly verify the three equations in \eqref{lemma13-1}. Without loss of generality we may assume that $j=1.$ Since $\partial_{x}U_{1}^{\ast}$ is a kernel of the operator $\mathcal{L}%
		_{U_{1}^{\ast}},$ we have%
		\begin{equation}
			\mathcal{L}_{u}\left[  \partial_{x}U_{1}^{\ast}\right]  =\mathcal{L}%
			_{U_{1}^{\ast}}\left[  \partial_{x}U_{1}^{\ast}\right]  +6\left(
			u-U_{1}^{\ast}\right)  \partial_{x}U_{1}^{\ast}=6\left(  U_{2}^{\ast}%
			+U_{3}^{\ast}+\xi\right)  \partial_{x}U_{1}^{\ast}.\label{LU1}%
		\end{equation}
		As a direct consequence,
		\begin{equation}
			\int_{\mathbb{R}^{2}}\partial_{x}U_{1}^{\ast}\mathcal{L}_{u}\left[
			\partial_{x}U_{1}^{\ast}\right]  dxdy=6\int_{\mathbb{R}^{2}}\left(
			\partial_{x}U_{1}^{\ast}\right)  ^{2}\left(  U_{2}^{\ast}+U_{3}^{\ast}%
			+\xi\right)  dxdy.\label{u1u1}%
		\end{equation}
		To estimate the right hand side, we first differentiate the equation
		\[
		\mathcal{L}_{U^{\ast}}\xi=-E\left(  U^{\ast}\right)  -3\xi^{2}:=J
		\]
		with respect to $x$. This yields
		\[
		\mathcal{L}_{U^{\ast}}\left[  \partial_{x}\xi\right]  +6\partial_{x}U^{\ast
		}\xi=\partial_{x}J.
		\]
		Multiplying both sides with $\partial_{x}U_{1}^{\ast}$ and integrating by
		parts, we obtain
		\[
		\int_{\mathbb{R}^{2}}\left(  -\partial_{x}\left(  \mathcal{L}_{U^{\ast}%
		}\left[  \partial_{x}U_{1}^{\ast}\right]  \right)  +6\partial_{x}U_{1}^{\ast
		}\partial_{x}U^{\ast}\right)  \xi dxdy=\int_{\mathbb{R}^{2}}\partial
		_{x}J\partial_{x}U_{1}^{\ast}dxdy.
		\]
		Reorganizing terms, we have
		\begin{align*}
			6\int_{\mathbb{R}^{2}}\left(  \partial_{x}U_{1}^{\ast}\right)  ^{2}\xi dxdy 
			=&\int_{\mathbb{R}^{2}}\partial_{x}J\partial_{x}U_{1}^{\ast}dxdy-6\int%
			_{\mathbb{R}^{2}}\left(  \partial_{x}U_{2}^{\ast}+\partial_{x}U_{3}^{\ast
			}\right)  \partial_{x}U_{1}^{\ast}\xi dxdy\\
			&  +6\int_{\mathbb{R}^{2}}\partial_{x}\left[  \left(  U_{2}^{\ast}+U_{3}%
			^{\ast}\right)  \partial_{x}U_{1}^{\ast}\right]  \xi dxdy\\
			=&\int_{\mathbb{R}^{2}}\partial_{x}J\partial_{x}U_{1}^{\ast}dxdy+6\int%
			_{\mathbb{R}^{2}}\left(  U_{2}^{\ast}+U_{3}^{\ast}\right)  \partial_{x}%
			^{2}U_{1}^{\ast}\xi dxdy.
		\end{align*}
		Inserting this into $\left(  \ref{u1u1}\right)  ,$ we get
		\begin{align*}
			&  \int_{\mathbb{R}^{2}}\partial_{x}U_{1}^{\ast}\mathcal{L}_{u}\left[
			\partial_{x}U_{1}^{\ast}\right]  dxdy\\
			&  =6\int_{\mathbb{R}^{2}}\left(  \partial_{x}U_{1}^{\ast}\right)  ^{2}\left(
			U_{2}^{\ast}+U_{3}^{\ast}\right)  dxdy+\int_{\mathbb{R}^{2}}\partial
			_{x}J\partial_{x}U_{1}^{\ast}dxdy+6\int_{\mathbb{R}^{2}}\left(  U_{2}^{\ast
			}+U_{3}^{\ast}\right)  \partial_{x}^{2}U_{1}^{\ast}\xi dxdy\\
			&  =3\int_{\mathbb{R}^{2}}U_{1}^{\ast2}\partial_{x}^{2}\left(  U_{2}^{\ast
			}+U_{3}^{\ast}\right)  dxdy-3\int_{\mathbb{R}^{2}}\partial_{x}\left(  \xi
			^{2}\right)  \partial_{x}U_{1}^{\ast}dxdy\\
			&\quad +6\int_{\mathbb{R}^{2}}\left(  U_{2}^{\ast}+U_{3}^{\ast}\right)
			\partial_{x}^{2}U_{1}^{\ast}\xi dxdy+O\left(  \gamma^{-5}\right)  .
		\end{align*}
		Note that
		\[
		\int_{\mathbb{R}^{2}}U_{1}^{\ast2}\partial_{x}^{2}\left(  U_{2}^{\ast}%
		+U_{3}^{\ast}\right)  dxdy=-d^{\ast}\operatorname{Re}\left[  \left(
		z_{1}^{\ast}-z_{2}^{\ast}\right)  ^{-4}+\left(  z_{1}^{\ast}-z_{3}^{\ast
		}\right)  ^{-4}\right]  +O\left(  \gamma^{-5}\right)  .
		\]
		We then get
		\begin{align*}
			\int_{\mathbb{R}^{2}}\partial_{x}U_{1}^{\ast}\mathcal{L}_{u}\left[
			\partial_{x}U_{1}^{\ast}\right]  dxdy
			=&-3d^{\ast}\operatorname{Re}\left[  \left(  z_{1}^{\ast}-z_{2}^{\ast
			}\right)  ^{-4}+\left(  z_{1}^{\ast}-z_{3}^{\ast}\right)  ^{-4}\right]  \\
			&  +\mathbf{p}_{1}^{2}\int_{\mathbb{R}^{2}}\left(  3\omega_{1}^{2}+6\omega
			_{1}\right)  \partial_{x}^{2}U_{1}^{\ast}dxdy+O\left(  \gamma^{-5}\right)  .
		\end{align*}
		This is the required identity.
		
		\medskip
		
		Next we compute
		\begin{align*}
			&  \int_{\mathbb{R}^{2}}\partial_{x}U_{1}^{\ast}\mathcal{L}_{u}\left[
			\partial_{y}U_{1}^{\ast}\right]  dxdy\\
			&  =\int_{\mathbb{R}^{2}}\partial_{x}U_{1}^{\ast}\mathcal{L}_{U_{1}^{\ast}%
			}\left[  \partial_{y}U_{1}^{\ast}\right]  dxdy+6\int_{\mathbb{R}^{2}}%
			\partial_{x}U_{1}^{\ast}\left(  u-U_{1}^{\ast}\right)  \partial_{y}U_{1}%
			^{\ast}dxdy\\
			&  =6\int_{\mathbb{R}^{2}}\partial_{x}U_{1}^{\ast}\partial_{y}U_{1}^{\ast
			}\left(  U_{2}^{\ast}+U_{3}^{\ast}+\xi\right)  dxdy.
		\end{align*}
		On the other hand,
		\[
		\mathcal{L}_{U^{\ast}}\left[  \partial_{y}\xi\right]  +6\partial_{y}U^{\ast
		}\xi=\partial_{y}J,
		\]
		which implies
		\[
		\int_{\mathbb{R}^{2}}\left(  -\partial_{y}\left(  \mathcal{L}_{U^{\ast}%
		}\left[  \partial_{x}U_{1}^{\ast}\right]  \right)  +6\partial_{x}U_{1}^{\ast
		}\partial_{y}U^{\ast}\right)  \xi dxdy=\int_{\mathbb{R}^{2}}\partial
		_{y}J\partial_{x}U_{1}^{\ast}dxdy.
		\]
		From this identity, we get, using similar computation as before,
		\begin{align*}
			&  \int_{\mathbb{R}^{2}}\partial_{x}U_{1}^{\ast}\mathcal{L}_{u}\left[
			\partial_{y}U_{1}^{\ast}\right]  dxdy\\
			&  =6\int_{\mathbb{R}^{2}}\partial_{x}U_{1}^{\ast}\partial_{y}U_{1}^{\ast
			}\left(  U_{2}^{\ast}+U_{3}^{\ast}\right)  dxdy+\int_{\mathbb{R}^{2}}%
			\partial_{y}J\partial_{x}U_{1}^{\ast}dxdy+6\int_{\mathbb{R}^{2}}\left(
			U_{2}^{\ast}+U_{3}^{\ast}\right)  \partial_{x}\partial_{y}U_{1}^{\ast}\xi
			dxdy\\
			&  =3\int_{\mathbb{R}^{2}}\partial_{x}\partial_{y}\left(  U_{2}^{\ast}%
			+U_{3}^{\ast}\right)  U_{1}^{\ast2}dxdy-3\int_{\mathbb{R}^{2}}\partial
			_{y}\left(  \xi^{2}\right)  \partial_{x}U_{1}^{\ast}dxdy\\
			&\quad+6\int_{\mathbb{R}%
				^{2}}\left(  U_{2}^{\ast}+U_{3}^{\ast}\right)  \partial_{x}\partial_{y}%
			U_{1}^{\ast}\xi dxdy+O\left(  \gamma^{-5}\right)  .
		\end{align*}
		In view of the estimate
		\[
		\int_{\mathbb{R}^{2}}U_{1}^{\ast2}\partial_{x}\partial_{y}\left(  U_{2}^{\ast
		}+U_{3}^{\ast}\right)  dxdy=-3d^{\ast}\operatorname{Im}\left[  \left(
		z_{1}^{\ast}-z_{2}^{\ast}\right)  ^{-4}+\left(  z_{1}^{\ast}-z_{3}^{\ast
		}\right)  ^{-4}\right]  +O\left(  \gamma^{-5}\right)  ,
		\]
		we then arrive at
		\begin{align*}
			\int_{\mathbb{R}^{2}}\partial_{x}U_{1}^{\ast}\mathcal{L}_{u}\left[
			\partial_{y}U_{1}^{\ast}\right]  dxdy=~&3d^{\ast}\operatorname{Im}\left[  \left(  z_{1}^{\ast}-z_{2}^{\ast
			}\right)  ^{-4}+\left(  z_{1}^{\ast}-z_{3}^{\ast}\right)  ^{-4}\right]  \\
			&  +\mathbf{p}_{1}^{2}\int_{\mathbb{R}^{2}}\left(  3\omega_{1}^{2}+6\omega
			_{1}\right)  \partial_{x}\partial_{y}U_{1}^{\ast}dxdy+O\left(  \gamma
			^{-5}\right)  .
		\end{align*}
		
		\medskip
		
		Regarding the last equation in \eqref{lemma13-1}, as what we have done for the first one we get that
		\begin{align*}
			&  \int_{\mathbb{R}^{2}}\partial_{y}U_{1}^{\ast}\mathcal{L}_{u}\left[
			\partial_{y}U_{1}^{\ast}\right]  dxdy\\
			&  =6\int_{\mathbb{R}^{2}}\left(  \partial_{y}U_{1}^{\ast}\right)  ^{2}\left(
			U_{2}^{\ast}+U_{3}^{\ast}\right)  dxdy+\int_{\mathbb{R}^{2}}\partial
			_{y}J\partial_{y}U_{1}^{\ast}dxdy+6\int_{\mathbb{R}^{2}}\left(  U_{2}^{\ast
			}+U_{3}^{\ast}\right)  \partial_{y}^{2}U_{1}^{\ast}\xi dxdy\\
			&  =3\int_{\mathbb{R}^{2}}U_{1}^{\ast2}\partial_{y}^{2}\left(  U_{2}^{\ast
			}+U_{3}^{\ast}\right)  dxdy-3\int_{\mathbb{R}^{2}}\partial_{y}\left(  \xi
			^{2}\right)  \partial_{y}U_{1}^{\ast}dxdy\\
			&\quad+6\int_{\mathbb{R}^{2}}\left(  U_{2}^{\ast}+U_{3}^{\ast}\right)
			\partial_{y}^{2}U_{1}^{\ast}\xi dxdy+O\left(  \gamma^{-5}\right).
		\end{align*}
		Using the fact that 
		\[
		\int_{\mathbb{R}^{2}}U_{1}^{\ast2}\partial_{y}^{2}\left(  U_{2}^{\ast}%
		+U_{3}^{\ast}\right)  dxdy= d^{\ast}\operatorname{Re}\left[  \left(
		z_{1}^{\ast}-z_{2}^{\ast}\right)  ^{-4}+\left(  z_{1}^{\ast}-z_{3}^{\ast
		}\right)  ^{-4}\right]  +O\left(  \gamma^{-5}\right)  .
		\]
		We obtain that
		\begin{align*}
			\int_{\mathbb{R}^{2}}\partial_{y}U_{1}^{\ast}\mathcal{L}_{u}\left[
			\partial_{y}U_{1}^{\ast}\right]  dxdy=~&3d^{\ast}\operatorname{Re}\left[  \left(  z_{1}^{\ast}-z_{2}^{\ast
			}\right)  ^{-4}+\left(  z_{1}^{\ast}-z_{3}^{\ast}\right)  ^{-4}\right]  \\
			&  +\mathbf{p}_{1}^{2}\int_{\mathbb{R}^{2}}\left(  3\omega_{1}^{2}+6\omega
			_{1}\right)  \partial_{y}^{2}U_{1}^{\ast}dxdy+O\left(  \gamma^{-5}\right)  .
		\end{align*}
		
		Now we verify the equations in \eqref{lemma13-2}. 
		\begin{align*}
			&  \int_{\mathbb{R}^{2}}\partial_{x}U_{j}^{\ast}\mathcal{L}_{u}\left[
			\partial_{x}U_{k}^{\ast}\right]  dxdy\\
			&  =\int_{\mathbb{R}^{2}}\partial_{x}U_{j}^{\ast}\mathcal{L}_{U_{k}^{\ast}%
			}\left[  \partial_{x}U_{k}^{\ast}\right]  dxdy+6\int_{\mathbb{R}^{2}}%
			\partial_{x}U_{j}^{\ast}\left(  u-U_{k}^{\ast}\right)  \partial_{x}U_{k}%
			^{\ast}dxdy\\
			&  =6\int_{\mathbb{R}^{2}}\partial_{x}U_{j}^{\ast}\partial_{x}U_{k}^{\ast
			}\left(\sum_{\ell\neq k}U_{\ell}^{\ast}+\xi\right)  dxdy\\
			&  =6\int_{\mathbb{R}^{2}}\partial_{x}U_{j}^{\ast}\partial_{x}U_{k}^{\ast
			}U_{j}^{\ast}dxdy+O\left(  \gamma^{-5}\right)  \\
			&  =3d^{\ast}\operatorname{Re}\left[  \left(  z_{j}^{\ast}-z_{k}^{\ast
			}\right)  ^{-4}\right]  +O\left(  \gamma^{-5}\right)  .
		\end{align*}
		Since the computation of other integrals like $\int_{\mathbb{R}^{2}}%
		\partial_{x}U_{j}^{\ast}\mathcal{L}_{u}\left[  \partial_{y}U_{k}^{\ast
		}\right]  dxdy$ is quite similar, the details of these computation will be omitted.
	\end{proof}
	
	The key result of this section is the following
	
	\begin{proposition}
		\label{Morse}The Morse index of $u$ is equal to $3,$ provided that $B$ is
		sufficiently large.
	\end{proposition}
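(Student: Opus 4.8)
The plan is to localize the spectral problem near the three lumps and reduce the computation of the Morse index to the signature of a $6\times 6$ matrix built from the approximate kernels $\partial_x U_j^\ast,\partial_y U_j^\ast$, $j=1,2,3$. First I would decompose the energy space as $K\oplus K^{\perp}$, where $K=\operatorname{span}\{\partial_x U_j^\ast,\partial_y U_j^\ast\}$ is the six–dimensional approximate kernel. Each single lump $U_j^\ast$ has Morse index one, its unique negative direction being an even function $\psi_j$ that is orthogonal to $K$ by parity; combined with the nondegeneracy of $\mathcal{L}_U$ from \cite{Liu}, the a priori estimate of Lemma \ref{apries} shows that for $B$ large $\mathcal{L}_u$ is uniformly positive definite on the part of $K^{\perp}$ orthogonal to $\psi_1,\psi_2,\psi_3$, and negative on $\operatorname{span}\{\psi_1,\psi_2,\psi_3\}$. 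Since the coupling between $K$ and $K^{\perp}$ is of lower order, a standard Lyapunov--Schmidt reduction shows that the Morse index of $\mathcal{L}_u$ equals $3+n_-(Q)$, where $n_-$ counts negative eigenvalues and $Q$ is the symmetric matrix with entries $\int_{\mathbb{R}^2}\partial_\alpha U_j^\ast\,\mathcal{L}_u[\partial_\beta U_k^\ast]\,dxdy$, $j,k\in\{1,2,3\}$, $\alpha,\beta\in\{x,y\}$. It then suffices to prove $n_-(Q)=0$.

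The entries of $Q$ are exactly those supplied by Lemma \ref{pro1}. Encoding the displacement of the $j$-th lump, i.e.\ the coefficients of $\partial_x U_j^\ast$ and $\partial_y U_j^\ast$, by the complex number $\zeta_j=\delta_j^x+i\delta_j^y$ and writing $\zeta=(\zeta_1,\zeta_2,\zeta_3)^{T}$, a direct bookkeeping of the six blocks in \eqref{lemma13-1}--\eqref{lemma13-2} shows that the quadratic form attached to $Q$ is, at leading order $\gamma^{-4}$,
\[
\mathcal{Q}(\zeta)=d^\ast\,\operatorname{Re}\big[\zeta^{T}M\zeta\big]+\sum_{j=1}^{3}\mathbf{p}_j^{2}\big(a^\ast(\delta_j^x)^2+2b^\ast\delta_j^x\delta_j^y+c^\ast(\delta_j^y)^2\big),
\]
where $M=D\mathcal{F}|_{(\tilde z_1,\tilde z_2,\tilde z_3)}$ (recall $\partial_{z_k}F_j=3(z_j-z_k)^{-4}$ for $k\neq j$, so the blocks $d^\ast\!\left[\begin{smallmatrix}\operatorname{Re}F_{jk}&-\operatorname{Im}F_{jk}\\-\operatorname{Im}F_{jk}&-\operatorname{Re}F_{jk}\end{smallmatrix}\right]$ assemble into $d^\ast\operatorname{Re}[\zeta^T M\zeta]$). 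By Lemma \ref{vector}, $M$ has a two–complex–dimensional kernel spanned by $\mathbf{b}_1,\mathbf{b}_2$, so the first term is degenerate along a four–real–dimensional space; this is precisely the four–dimensional kernel predicted by Theorem \ref{main} (two translations together with the $A$– and $B$–deformations). Crucially, since $\mathbf{p}_j=O(\gamma^{-2})$, the self–interaction correction built from $a^\ast,b^\ast,c^\ast$ is of the same order $\gamma^{-4}$ as $d^\ast\operatorname{Re}[\zeta^T M\zeta]$ and does not vanish on $\ker M$, so it cannot be discarded.

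The remaining task is computational: evaluate $d^\ast$ in \eqref{cstar}, the integrals $a^\ast,b^\ast,c^\ast$ using the explicit lump $U$ and the corrector $\omega$ of Lemma \ref{aprio}, and the numbers $\mathbf{p}_j$ at the balanced configuration $(\tilde z_1,\tilde z_2,\tilde z_3)$, then assemble $\mathcal{Q}$ and diagonalize it. I expect this to show that $\mathcal{Q}$ is positive semidefinite with exactly a four–dimensional kernel, so that $n_-(Q)=0$ and the Morse index is $3$; one should also check that this kernel coincides, to leading order, with the projection onto $K$ of $\partial_x u,\partial_y u,\partial_A u,\partial_B u$, consistently with Theorem \ref{main}. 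The main obstacle is exactly the degeneracy flagged in the introduction: because $d^\ast\operatorname{Re}[\zeta^T M\zeta]$ alone already has signature $(1,1)$ on $(\ker M)^{\perp}$ and is degenerate on the four–dimensional $\ker M$, while the self–interaction correction enters at the same order and is nonzero on $\ker M$, the four–dimensional degeneracy can be resolved neither perturbatively nor by a determinant computation; one is forced to compute every constant explicitly and verify the signature of the full matrix by hand. Once $n_-(Q)=0$ is established for $B$ large, the value $3$ can be propagated to all $(A,B)$ by continuity, since the nondegeneracy in Theorem \ref{main} prevents any eigenvalue from crossing zero along the connected family.
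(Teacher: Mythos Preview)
Your reduction ``Morse index $=3+n_-(Q)$'' rests on the claim that the coupling between $K$ and $K^{\perp}$ is of lower order, but this is false. From $\mathcal{L}_u[\partial_xU_j^\ast]=6(u-U_j^\ast)\partial_xU_j^\ast=O(\gamma^{-2})$ one sees that the off--diagonal block $B=P\mathcal{L}_uP^{\perp}$ is $O(\gamma^{-2})$, hence the Schur--complement correction $BD^{-1}B^{T}$ is $O(\gamma^{-4})$, exactly the same order as $Q$ itself. So $n_-(Q)$ is not the quantity that governs the small spectrum of $\mathcal{L}_u$; one must work with the full effective matrix. This is precisely why the paper introduces the correctors $\omega_k$ into the eigenfunction ansatz $\phi=\phi^\ast+\sum_k(\sigma_k\partial_x\omega_k+\tau_k\partial_y\omega_k)+\zeta^\ast$: the identities $\int\partial_xU_j^\ast\mathcal{L}_u[\partial_x\omega_j]=-a^\ast\mathbf{p}_j^2+O(\gamma^{-5})$ etc.\ show that the $a^\ast,b^\ast,c^\ast$ contributions in Lemma~\ref{pro1} are \emph{cancelled} by the $\omega$--part of the Schur correction, not added to the reduced problem. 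After this cancellation the projected equation becomes simply $M\mathbf{e}=O(\gamma^{-1})+O(|\lambda|)$, with no self--interaction term left. Your formula for $\mathcal{Q}$ therefore does not describe the actual reduced problem, and the explicit computation of $a^\ast,b^\ast,c^\ast$ you propose would not decide the signature.

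There is also an internal tension in your plan: you correctly note that $d^\ast\operatorname{Re}[\zeta^{T}M\zeta]$ has signature $(1,1)$ on $(\ker M)^{\perp}$, yet expect $\mathcal{Q}$ to be positive semidefinite. The paper bypasses this entirely. It does not try to compute the signature of any reduced matrix; instead it argues by contradiction that no negative eigenvalue can drift to zero. If $\lambda_j\to 0^-$, the projected equation forces $\mathbf{e}\in\ker M$ in the limit, so the eigenfunction $\phi_j$ becomes, to leading order, a combination of the \emph{exact} kernel elements $\partial_xu,\partial_yu,\partial_Au,\partial_Bu$ (which are precisely the directions $c_1\mathbf{b}_1+c_2\mathbf{b}_2$). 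But an eigenfunction with $\lambda_j<0$ is $L^2$--orthogonal to every exact kernel element, contradicting $\phi_j\to\varphi\neq 0$. Hence negative eigenvalues stay bounded away from $0$, must converge to the unique negative eigenvalue of $\mathcal{L}_U$, and a counting (at most one per lump, at least one per lump via test functions) gives Morse index $3$. The use of the four exact kernels is the device that replaces your signature computation; without it the degeneracy you flagged cannot be resolved by computing constants.
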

	
	\begin{proof}
		Let $\lambda_{B}$ be a negative eigenvalue of the linearized operator, with
		$\phi_{B}$ being an eigenfunction normalized such that $\left\Vert \phi
		_{B}\right\Vert _{L^{\infty}\left(  \mathbb{R}^{2}\right)  }=1.$ Then
		\[
		\partial_{x}^{2}\phi_{B}-\phi_{B}+6u\phi_{B}-\partial_{x}^{-2}\partial_{y}%
		^{2}\phi_{B}=\lambda_{B}\phi_{B}.
		\]
		Taking $x$-derivative twice, we get
		\[
		\partial_{x}^{2}\left(  \partial_{x}^{2}\phi_{B}-\phi_{B}+6u\phi_{B}\right)
		-\partial_{y}^{2}\phi_{B}=\lambda_{B}\partial_{x}^{2}\phi_{B}.
		\]

		Our first step is to show that there exists $c_{0}$ independent of $B$ such
		that
		\begin{equation}
			\lambda_{B}\leq c_{0}<0. \label{bound}%
		\end{equation}
		Assume to the contrary that $\left(  \ref{bound}\right)  $ was not true. Then
		there was a sequence $\lambda_{j}\rightarrow0$ with corresponding normalized
		eigenfunctions $\phi_{j}.$
		
		Consider the translated sequence $\tilde{\phi}_{j}\left(  x,y\right)
		:=\phi_{j}\left(  x-x_{1}^{\ast},y-y_{1}^{\ast}\right)  .$ Since we have
		assumed that $\left\Vert \phi_{j}\right\Vert _{L^{\infty}\left(
			\mathbb{R}^{2}\right)  }=1,$ the new functions $\tilde{\phi}_{j}$ will
		converge to a function $\Phi,$ solution of the equation
		\[
		\partial_{x}^{2}\left(  \partial_{x}^{2}\Phi-\Phi+6U\Phi\right)  -\partial
		_{y}^{2}\Phi=0.
		\]
		By the nondegeneracy of lump, there exist constants $c_{1},c_{2}$(could be
		zero) such that
		\[
		\Phi=c_{1}\partial_{x}U+c_{2}\partial_{y}U.
		\]

		To simplify the notation, we omit the subscript $j$, if no confusion will
		arise. Then we can write $\phi$ as $\phi^{\ast}+\zeta,$ where
		\[
		\phi^{\ast}=%
		{\displaystyle\sum\limits_{k=1}^{3}}
		\left(  \sigma_{k}\partial_{x}U_{k}^{\ast}+\tau_{k}\partial_{y}U_{k}^{\ast
		}\right)  ,
		\]
		and $\zeta$ is small and satisfies the orthogonality condition: For $k=1,2,3,$%
		\[
		\int_{\mathbb{R}^{2}}\zeta\partial_{x}U_{k}^{\ast}dxdy=0,\text{ \ and }%
		\int_{\mathbb{R}^{2}}\zeta\partial_{y}U_{k}^{\ast}dxdy=0.
		\]
		We would like to estimate $\zeta,$ using the equation
		\[
		\mathcal{L}_{u}\zeta=-\mathcal{L}_{u}\phi^{\ast}+\lambda\phi^{\ast}%
		+\lambda\zeta.
		\]
		To do this, we first observe that $\mathcal{L}_{u}\phi^{\ast}$ is of the order
		$O\left(  \gamma^{-2}\right)  .$ Hence to obtain a precise expansion for
		$\zeta,$ we write
		\[
		\zeta=\sum\limits_{k=1}^{3}\left(  \sigma_{k}\partial_{x}\omega_{k}+\tau
		_{k}\partial_{y}\omega_{k}\right)  +\zeta^{\ast}.
		\]
		Using $\left(  \ref{Lw}\right)  ,\left(  \ref{LU1}\right)  ,$ for any $k=1,2,3,$ we can estimate%
		\begin{equation}
			\mathcal{L}_{u}\left[  \partial_{x}U_{k}^{\ast}\right]  +\mathcal{L}%
			_{u}\left[  \partial_{x}\omega_{k}\right]  =O\left(  \gamma^{-3}\right), 
			\label{e1}%
		\end{equation}%
		\begin{equation}
			\mathcal{L}_{u}\left[  \partial_{y}U_{k}^{\ast}\right]  +\mathcal{L}%
			_{u}\left[  \partial_{y}\omega_{k}\right]  =O\left(  \gamma^{-3}\right). 
			\label{e11}%
		\end{equation}
		It follows that
		\begin{align*}
			\mathcal{L}_{u}\zeta^{\ast}-\lambda\xi^{\ast}    =&-\sum\limits_{k=1}%
			^{3}\left(  \sigma_{k}\mathcal{L}_{u}\left[  \partial_{x}U_{k}^{\ast}%
			+\partial_{x}\omega_{k}\right]  +\tau_{k}\mathcal{L}_{u}\left[  \partial
			_{y}U_{k}^{\ast}+\partial_{y}\omega_{k}\right]  \right) \\
			&  +\lambda\left(  \phi^{\ast}+\sum\limits_{k=1}^{3}\left(  \sigma_{k}%
			\partial_{x}\omega_{k}+\tau_{k}\partial_{y}\omega_{k}\right)  \right)  .
		\end{align*}
		Let us denote the right hand side by $Q.$
		
		For a function $\eta,$ we define
		\begin{align*}
			\eta_{k,x}^{\parallel}  =\int_{\mathbb{R}^{2}}\eta\partial_{x}U_{k}^{\ast
			}dxdy,\quad \eta_{k,y}^{\parallel}=\int_{\mathbb{R}^{2}}\eta\partial
			_{y}U_{k}^{\ast}dxdy,
		\end{align*}
		and
		\begin{align*}
			\eta^{\Vert}  =\sum\limits_{k=1}^{3}\left(  \eta_{k,x}^{\parallel}%
			\partial_{x}U_{k}^{\ast}+\eta_{k,y}^{\parallel}\partial_{y}U_{k}^{\ast
			}\right)  ,
		\end{align*}
		and $\eta^{\perp}=\eta-\eta^{\Vert}.$ The function $\eta^{\perp}$ can be
		understood as the projection orthogonal to the kernels of the linearized
		operator. Then
		\begin{equation}
			\mathcal{L}_{u}\zeta^{\ast}-\lambda\xi^{\ast}=Q^{\perp}+Q^{\Vert}.
			\label{kistar}%
		\end{equation}

		Estimates $\left(  \ref{e1}\right)  $ and $\left(  \ref{e11}\right)  $ imply
		that $Q^{\perp}=O\left(  \gamma^{-3}\right)  .$ On the other hand, multiplying
		$\left(  \ref{kistar}\right)  $ by $\partial_{x}U_{k}^{\ast},\partial_{y}%
		U_{k}^{\ast}$ and integrating tell us that $Q^{\Vert}=o\left(  \zeta^{\ast
		}\right)  .$ Following the proof of Lemma \ref{apries}, we find that there
		exists $\varepsilon>0$, such that
		\[
		\left\Vert \zeta^{\ast}\theta_{\varepsilon}\right\Vert _{L^{\infty}\left(
			\mathbb{R}^{2}\right)  }\leq C\gamma^{-3}.
		\]

		With the estimate of $\zeta$ at hand, now we project the equation
		$\mathcal{L}_{u}\phi=\lambda\phi$ onto the kernels $\partial_{x}U_{k}^{\ast}$
		and $\partial_{y}U_{k}^{\ast}.$ More precisely, we consider the following two
		identities
		\begin{align*}
			\int_{\mathbb{R}^{2}}\partial_{x}U_{k}^{\ast}\left(  \mathcal{L}_{u}%
			\phi-\lambda\phi\right)  dxdy =0,\quad
			\int_{\mathbb{R}^{2}}\partial_{y}U_{k}^{\ast}\left(  \mathcal{L}_{u}%
			\phi-\lambda\phi\right)  dxdy =0.
		\end{align*}
		Now we compute
		\begin{align*}
			\int_{\mathbb{R}^{2}}\partial_{x}U_{j}^{\ast}\mathcal{L}_{u}\left[
			\partial_{x}\omega_{j}\right]  dxdy
			&  =6\int_{\mathbb{R}^{2}}\partial_{x}U_{j}^{\ast}\left( \sum_{k\neq j} U_{k}^{\ast}%
			+\xi\right)  \partial_{x}\omega_{j}dxdy\\
			&  =6\int_{\mathbb{R}^{2}}\left(  -\omega_{j}\mathbf{p}_{j}-\frac{1}{2}%
			\omega_{j}^{2}\right)  \partial_{x}^{2}U_{j}^{\ast}dxdy+O\left(  \gamma
			^{-5}\right) \\
			&  =-a^{\ast}\mathbf{p}_{j}^{2}+O\left(  \gamma^{-5}\right)  .
		\end{align*}
		Similarly, we have%
		\begin{align*}
			\int_{\mathbb{R}^{2}}\partial_{x}U_{j}^{\ast}\mathcal{L}_{u}\left[
			\partial_{y}\omega_{j}\right]  dxdy
			&  =6\int_{\mathbb{R}^{2}}\partial_{x}U_{j}^{\ast}\left(  \sum_{k\neq j}
			U_{k}^{\ast}+\xi\right)  \partial_{y}\omega_{j}dxdy\\
			&  =6\int_{\mathbb{R}^{2}}\left(  -\omega_{j}\mathbf{p}_{j}-\frac{1}{2}%
			\omega_{j}^{2}\right)  \partial_{x}\partial_{y}U_{j}^{\ast}dxdy\\
			&  =-b^{\ast}\mathbf{p}_{j}^{2}+O\left(  \gamma^{-5}\right),
		\end{align*}
		and
		\begin{align*}
			\int_{\mathbb{R}^{2}}\partial_{y}U_{j}^{\ast}\mathcal{L}_{u}\left[
			\partial_{y}\omega_{j}\right]  dxdy
			&  =6\int_{\mathbb{R}^{2}}\partial_{y}U_{j}^{\ast}\left(  \sum_{k\neq j}
			U_{k}^{\ast}+\xi\right)  \partial_{y}\omega_{j}dxdy\\
			&  =6\int_{\mathbb{R}^{2}}\left(  -\omega_{j}\mathbf{p}_{j}-\frac{1}{2}%
			\omega_{j}^{2}\right)  \partial_{y}^2U_{j}^{\ast}dxdy\\
			&  =-c^{\ast}\mathbf{p}_{j}^{2}+O\left(  \gamma^{-5}\right).	
		\end{align*}
		While for $j\neq k$, we have
		\begin{align*}
			\int_{\mathbb{R}^{2}}\partial_{x}U_{j}^{\ast}\mathcal{L}_{u}\left[
			\partial_{x}\omega_{k}\right]  dxdy
			=O\left(  \gamma^{-5}\right),\quad
			\int_{\mathbb{R}^{2}}\partial_{x}U_{j}^{\ast}\mathcal{L}_{u}\left[
			\partial_{y}\omega_{k}\right]  dxdy=O\left(  \gamma^{-5}\right),
		\end{align*}
		and
		\begin{align*}
			\int_{\mathbb{R}^{2}}\partial_{y}U_{j}^{\ast}\mathcal{L}_{u}\left[
			\partial_{y}\omega_{k}\right]  dxdy =O\left(  \gamma^{-5}\right).
		\end{align*}
		From the above estimates and Lemma \ref{pro1}, we deduce that for $k=1,2,3,$%
		\[
		\int_{\mathbb{R}^{2}}\left(  \partial_{x}U_{k}^{\ast}-i\partial_{y}U_{k}%
		^{\ast}\right)  \left(  \mathcal{L}_{u}\phi-\lambda\phi\right)  dxdy=\sum
		\limits_{j=1}^{3}\left(  F_{k,j}\mathbf{e}_{j}\right)  +O\left(  \gamma
		^{-5}\right)  ,
		\]
		where $\mathbf{e}_{j}=\sigma_{j}+i\tau_{j}$ is complex number. Moreover, there
		exists universal positive constants $\delta_{1},\delta_{2},$ such that
		\[
		\delta_{1}\leq\sum\limits_{j}\left\vert \mathbf{e}_{j}\right\vert ^{2}%
		\leq\delta_{2}.
		\]
		Hence, after a scaling, in terms of the matrix $M$ defined in Lemma
		\ref{vector}, we obtain%
		\[
		M\left(  \mathbf{e}_{1},\mathbf{e}_{2},\mathbf{e}_{3}\right)  ^{T}=O\left(
		\gamma^{-1}\right)  +O\left(  \left\vert \lambda\right\vert \right)  .
		\]
		We then deduce that for some constants $c_{1},c_{2},$ with $c_{1}^{2}%
		+c_{2}^{2}$ uniformly bounded away from zero, such that
		\[
		\left(  \mathbf{e}_{1},\mathbf{e}_{2},\mathbf{e}_{3}\right)  =c_{1}%
		\mathbf{b}_{1}+c_{2}\mathbf{b}_{2}+O\left(  \gamma^{-1}\right)  +O\left(
		\left\vert \lambda\right\vert \right)  .
		\]
		On the other hand, using the explicit four-parameter family of solutions
		$u_{A,B}$ for the Boussinesq equation, we deduce that there exists a kernel
		$\varphi$ of $\mathcal{L}_{u}$ whose projection on $\partial_{x}U_{k}^{\ast
		},\partial_{y}U_{k}^{\ast}$ is close to $c_{1}\mathbf{b}_{1}+c_{2}%
		\mathbf{b}_{2}.$ We then conclude that $\phi_{j}$ will not be $L^{2}%
		$-orthogonal to the $\varphi$ for $j$ large enough. This contradicts with the
		fact that $\phi_{j}$, as an eigenfunction with respect to a negative
		eigenvalue, has to be $L^{2}$-orthogonal to $\varphi.$
		
		Now we have proved that negative eigenvalue $\lambda_{j}$ will be uniformly
		bounded away from $0.$ Using this information, we then deduce that
		$\lambda_{j}$ has to converge to the unique negative eigenvalue of the
		operator $\mathcal{L}_{U}.$\ By result in \cite{Liu}, the Morse index of the
		standard lump $U$ is equal to $1,$ we then find that the Morse index of $u$ is
		at most $3.$ On the other hand, by constructing explicit test functions using
		the eigenfunctions(of negative eigenvalue) of $\mathcal{L}_{U}$, we know that
		the Morse index of $u$ is at least $3.$ Hence we conclude that the Morse index
		of $u$ is equal to $3.$ This finishes the proof.
	\end{proof}
	
	Having analyzed the solutions $u_{0,B}$ for $B$ large, we proceed to show that
	all the solutions $u_{A,B}=2\partial_{x}^{2}\ln h_{A,B}$ has Morse index $3.$
	
	\begin{proof}
		[Proof of Theorem \ref{main}]Straightforward application of the results in the
		previous section tells us that there is a Backlund transformation from the
		translated degree $2$ polynomial
		\[
		f\left(  x,y\right)  =x^{2}+\left(  y-\frac{2\sqrt{3}i}{3}\right)  ^{2}+3
		\]
		to the degree $4$ polynomial $g,$ which is defined explicitly by
		\begin{align*}
			g\left(  x,y\right)     =~&x^{4}+2ix^{3}y+2ixy^{3}-y^{4}+\frac{10x^{3}}%
			{\sqrt{3}}+4\sqrt{3}ix^{2}y+2\sqrt{3}xy^{2}+\frac{\sqrt{3}iy^{3}}{8}\\
			&  +20x^{2}+30ixy+2y^{2}+\left(  \frac{50}{\sqrt{3}}+\frac{Ai+B}{2}\right)
			x\\
			&  +\left(  \frac{80i}{\sqrt{3}}+\frac{A-Bi}{2}\right)  y-25+\frac
			{Ai+B}{2\sqrt{3}}.
		\end{align*}
		Then the function $g$ is Backlund transformed to $h_{A,B}.$ Observe that both
		$f$ and $g$ have finitely many simple zeros. Using this fact, the
		nondegeneracy of the linearized operator $\mathcal{L}_{u_{A,B}}$ then follows
		directly from the same argument as that of \cite{Liu}, that is, by analyzing
		the associated linearized Backlund transformations.
		
		\medskip
		
		To show that the Morse index of $u_{A,B}$ is equal to $3,$ let $m$ be
		sufficiently large and consider the family of solutions $u_{At,Bt+\left(
			1-t\right)  m}.$ When $t=1,$ it is $u_{A,B},$ and for $t=0,$ the solution is
		$u_{0,m}$. We now know that for any $t\in\left[  0,1\right]  ,$ the solution
		is nondegenerated in the sense that the linearized operator has no nontrivial
		kernels. This together with the continuous dependence of the negative
		eigenvalues with respect to $t$, implies that as $t$ decreases from $1$ to
		$0,$ a negative eigenvalue can not diminish to the zero eigenvalue. We then
		see that the Morse index of $u_{A,B}$ should be the same as that of $u_{0,m}.$
		By Proposition \ref{Morse}, $u_{0,m}$ has Morse index $3$, provided that $m$
		is large. We then conclude that for any $A,B,$ the Morse index of $u_{A,B}$
		equals $3.$ This completes the proof.
	\end{proof}
	
	As a final remark, we point out that for solutions with higher degrees, the
	above arguments also work. The only delicate part is, we need to show, as one
	of the parameter tends to infinity, the solution splits into a number of
	classical lumps which are far away from each other. This is to ensure that
	procedure of reverse Lyapunov-Schmidt reduction can be started. For polynomial
	tau functions of degree $k\left(  k+1\right)  ,$ the corresponding solution
	should have Morse index $k\left(  k+1\right)  /2.$ Once this is proved, it will yield the existence of infinitely many solutions
	for the GP and generalized KP equation. Rigorous
	justification of this fact would require a complete classification of the moduli space of lump type solutions. This is an ongoing project.


\begin{thebibliography}{99}                                                                                               %
		
		
		\bibitem {Ablowitz}M.J. Ablowitz, S. Chakravarty, A.D. Trubatch, J.
		Villarroel, A novel class of solutions of the non-stationary Schr\"{o}dinger
		and the Kadomtsev-Petviashvili I equations, \emph{Phys. Lett.} A 267 (2000),
		no. 2-3, 132-146.
		
		\bibitem {Airault}H. Airault, Solutions of the Boussinesq equation.
		\emph{Phys. D} 21 (1986), no. 1, 171-176.
		
		\bibitem {Moser}H. Airault, H.P. McKean, J. Moser, Rational and elliptic
		solutions of the Korteweg-de Vries equation and a related many-body problem.
		\emph{Comm. Pure Appl. Math.} 30 (1977), no. 1, 95-148.
		
		\bibitem {Saut97}A. de Bouard, J.-C. Saut, Solitary waves of generalized
		Kadomtsev-Petviashvili equations, \emph{Annales de l'Institut Henri Poincare
			(C) Non Linear Analysis.} 14 (1997) 211--236.
		
		\bibitem {Bouard97}A. de Bouard, J.-C. Saut, Symmetries and Decay of the
		Generalized Kadomtsev--Petviashvili Solitary Waves, SIAM J. Math. Anal. 28
		(1997) 1064--1085.
		
		\bibitem {Clarkson}P.A. Clarkson, Rational solutions of the Boussinesq
		equation. \emph{Anal. Appl.} 6 (2008), no. 4, 349-369.
		
		\bibitem {Prada}P.G. Estevez, J. Prada, Lump solutions for PDE's: Algorithmic
		construction and Classification, \emph{Journal of nonlinear Mathematical
			Physics}, 15 (2008), suppl. 3, 166-175.
		
		
		\bibitem {Pelinovskii95}V.M. Galkin, D.E. Pelinovsky; Y.A. Stepanyants, The
		structure of the rational solutions to the Boussinesq equation. \emph{Phys. D}
		80 (1995), no. 3, 246-255.
		
		\bibitem {Pelinovskii93}K.A. Gorshkov, D.E. Pelinovskii, Y.A. Stepanyants,
		Normal and anomalous scattering, formation and decay of bound states of
		two-dimensional solitons described by the Kadomtsev-Petviashvili equation,
		\emph{Soviet Journal of Experimental and Theoretical Physics Letters}, 77(2),
		1993, 237-245.
		
		\bibitem {Hirota}R. Hirota, The direct method in soliton theory. Cambridge
		Tracts in Mathematics, 155. Cambridge University Press, Cambridge, 2004.
		
		\bibitem {Kenig}C.E. Kenig, On the local and global well-posedness theory for
		the KP-I equation, \emph{Ann. Inst. H. Poincare Anal. Non-Linire}. 21 (2004),
		827--838.
		
		
		\bibitem {LiuYue}Y. Liu, X.-P. Wang, Nonlinear stability of solitary waves of
		a generalized Kadomtsev-Petviashvili equation, \emph{Commun.Math. Phys.} 183
		(1997) 253--266.
		
		\bibitem {LWW}Y. Liu, Z.P. Wang, J.C. Wei, W. Yang, From KP-I lump solution to
		travelling waves of Gross-Pitaevskii equation, arXiv:2110.15472.
		
		\bibitem {Liu}Y. Liu, J.C. Wei, Nondegeneracy, Morse Index and orbital
		stability of the KP-I lump solution, \emph{Arch Rational Mech Anal.} 234
		(2019) 1335-1389.
		
		
		\bibitem {LYW}Y. Liu, J.C. Wei, W. Yang, Uniqueness of lump solutions of KP-I
		equation, arXiv:2309.01048.
		
		
		\bibitem {Manakovz}S.V. Manakov, V.E. Zakharov, L.A. Bordag, A.R. Its, V.B.
		Matveev, Two dimensional solitons of the Kadomtsev-Petviashvili equation and
		their interaction. \emph{Physics letter A}, 63, 1977, 205-206.
		
		\bibitem {Moli}L. Molinet, J.C. Saut, N. Tzvetkov, Well-posedness and
		ill-posedness results for the Kadomtsev-Petviashvili-I equation, \emph{Duke
			Math. J.} 115 (2002).
		
		
		\bibitem {Moli2}L. Molinet, J.C. Saut, N. Tzvetkov, Global Well-Posedness for
		the KP-I Equation on the Background of a Non-Localized Solution, \emph{Commun.
			Math. Phys.} 272 (2007), 775--810.
		
		
		\bibitem {Pelinovskii}D.E. Pelinovskii, Y.A. Stepanyants, New multisoliton
		solutions of the Kadomtsev-Petviashvili equation, \emph{Soviet Journal of
			Experimental and Theoretical Physics Letters}, 57(1), 1993, 24-28.
		
		\bibitem {Pelnovsky94}D.E. Pelinovsky, Rational solutions of the
		Kadomtsev-Petviashvili hierarchy and the dynamics of their poles. I. New form
		of a general rational solution. \emph{J. Math. Phys.} 35 (1994), no. 11, 5820-5830.
		
		\bibitem {Pelnovsky98}D.E. Pelinovsky, Rational solutions of the KP hierarchy
		and the dynamics of their poles. II. Construction of the degenerate polynomial
		solutions. \emph{J. Math. Phys.} 39 (1998), no. 10, 5377-5395.
		
		\bibitem {Sat}J. Satsuma, M.J. Ablowitz, Two-dimensional lumps in nonlinear
		dispersive systems. \emph{J. Math. Phys.} 20(7), 1496-1503, 1979.
		
		\bibitem {Vi}J. Villarroel, M.J. Ablowitz, On the discrete spectrum of the
		nonstationary Schrodinger equation and multipole lumps of the
		Kadomtsev-Petviashvili I equation. \emph{Comm. Math. Phys}. 207 (1999), no. 1, 1-42.
		
		\bibitem {Yang}B. Yang, J. Yang, General Rogue Waves in the Boussinesq
		Equation, \emph{J. Phys. Soc. Jpn.} 89 (2020) 024003.
		
	\end{thebibliography}
\end{document}